\documentclass[12pt]{article}
\usepackage{a4wide,amsmath,amssymb,amsthm,graphicx,color,comment}
\definecolor{orange}{rgb}{0.5,0.4,0.0}
\newtheorem{theorem}{Theorem}[section]
\newtheorem{corollary}[theorem]{Corollary}
\newtheorem{proposition}[theorem]{Proposition}
\newtheorem{lemma}[theorem]{Lemma}
\newtheorem{remark}[theorem]{Remark}
\newcommand\skiplinear[1]{#1}
\newcommand\skipgradient[1]{}
\newcommand\MRIdetails[1]{}
\newcommand\ul[1]{\underline{#1}}
\newcommand\uld[1]{\underline{d#1}}
\newcommand\DD{\mathcal{D}}
\newcommand\calA{\mathcal{A}}
\newcommand\BB{\mathcal{B}}
\newcommand\HH{\mathcal{H}}
\newcommand\Op{\mathcal{N}}
\newcommand\Proj{\mathcal{P}}
\newcommand\ellnew{\ell}
\newcommand\coilsens{\omega}
\newcommand\dualmap{\mathfrak{j}}
\newcommand\hrefprime{\mathcal{Q}_{\text{ref}}}
\newcommand\pot{\xi}
\newcommand\init{\eta}
\newcommand\vecr{\vec{x}}
\newcommand\contr{p}
\newcommand\Contr{P}
\newcommand\ulB{\underline{B}}
\newcommand\ulC{\underline{C}}
\newcommand\ulH{\underline{H}}
\newcommand\ulQ{\underline{Q}}
\newcommand\ulPsi{\underline{\Psi}}
\newcommand\Ncont{N_p}
\newcommand\Ncoil{N_{rc}}
\begin{document}
\title{Linearized uniqueness of space dependent coefficients in a non-autonomous evolution equation from non-local observations}
\author{Barbara Kaltenbacher\footnote{
Department of Mathematics,
Alpen-Adria-Universit\"at Klagenfurt.
barbara.kaltenbacher@aau.at}
}

\maketitle
\begin{abstract}
In this paper, we consider identification of space dependent coefficients in a time dependent PDE, which is non-autonomous, due to a bilinear control term.
We prove linearized uniqueness from time trace observations, in particular also considering non-local observations in the form of weighted integrals of the state over he spatial domain.
The result is obtained in an abstract setting and applied, first of all to the identification of a potential in a diffusion equation for illustration purposes, second to the reconstruction of equilibrium magnetization, relaxation rates and field inhomogeneity as space dependent quantities in model based qunatitative magnetic resonance imaging with the Bloch-Torrey equation, which constitutes the real world application motivating this study.
\end{abstract}

\noindent
\textbf{key words:} nonautonomous diffusion equation; multi-coefficient identification; uniqueness; Bloch-Torrey equation.\\
\textbf{AMS Subject Classification 2020:} 35R30, 35K20, 92C55

\section{Introduction}\label{sec:intro}
This paper provides an approach of proving uniqueness of space dependent coefficients in an evolutionary PDE 
\begin{equation}\label{PDE}
\DD_t u(t) +(\calA+ \BB(t)+\HH(\theta))u(t)=f(t) \quad t\in(0,T)\qquad u(0)=u_0(\theta)
\end{equation}
from time trace observations
\begin{equation}\label{obs}
y(t)=\mathcal{C}u(t)  \qquad t\in(0,T).
\end{equation}

It is motivated by imaging applications that are governed by evolutionary PDEs, where the imaging quantities are space dependent coefficients $\theta(x)$ and the imaging process can be steered by the action of a time dependent control operator $\BB(t)$ on the PDE state $u(t,x)$.
While $\DD_t$ is a time derivative operator (not necessarily of first, potentially even of fractional order), the operator $\calA$ is a spatial differential operator modelling diffusion on a domain $\Omega\subseteq\mathbb{R}^d$. (More details on the operators and the function space setting for \eqref{PDE} and \eqref{obs} will be given in Section~\ref{sec:setting} below.)

Motivated by applications, we are particularly interested in averaging, thus non-local  observations of a weighted spatial integral of the state
\begin{equation*}
y(t)=\mathcal{C}u(t)=\int_\Omega \coilsens(x)\,u(t,x) \, dx =\langle \coilsens,\,u(t)\rangle_{L^2(\Omega)},
\end{equation*}
cf. \eqref{obs_BT}, \eqref{C_avrg}, which makes the inverse problem of reconstructing space dependent coefficient unamenable to classical uniqueness techniques such as Carleman estimates \cite{Klibanov:1992,YamamotoCarleman:2025} or spectral methods 
\cite{KianLiLiuYamamoto:2021,Pierce:1979}, that are commonly used in case of interior or boundary observations. 
Identification of spatially variable coefficients from time traces of weighted averages is by far less well studied than the mentioned interior or boundary observation setting; to the best of the author's knowledge, existing literature is targeted at time dependent coefficients rather than space dependent ones, see, e.g., \cite{HazaneeLesnicIsmailovKerimov:2019,KerimovNazimIsmailov:2012,VanBockstalKhompysh:2026}.
Note that as compared to, e.g., final time observations inside the spatial domain, the fact that the observation is taken into an orthogonal direction of variability in \eqref{obs}, generally makes the inverse problem harder to tackle.  

\medskip

Concretely, we have in mind model based quantitative magnetic resonance imaging MRI, where one aims to map 
the space dependent relaxation rates $R_z$, $R_\perp$ and the equilibrium magnetization $M^{eq}$ 
\MRIdetails{containing the spin density} 
in the Bloch-Torrey PDE
\begin{equation}\label{eqn:bloch-torrey}
\begin{aligned}
&    \frac{d}{dt} \vec M(t, \vecr) + \gamma B^1(t, \vecr)\times\vec M(t, \vecr)    
+ \text{diag}(R_\perp(\vecr),R_\perp(\vecr),R_z(\vecr)) ({\vec M}(t, \vecr)-{\vec M}^0(\vecr))
\\&\hspace*{1cm}
    -\nabla\cdot\Bigl(D(\vecr) \, \nabla \vec M(t, \vecr)\Bigr)
=0\quad (t,\vecr)\in (0,T)\times\Omega, \quad
{\vec M}(0, \vecr)={\vec M}^0(\vecr)\quad \vecr\in \Omega
\end{aligned}
\end{equation}
with 
\begin{equation}\label{eqn:bloch-torrey-defs}
\begin{aligned}
&B^1(t, \vecr)=
\begin{pmatrix} p(t)c^+_{x}(\vecr)\\p(t)c^+_{y}(\vecr)\\
B^0 + \delta B^0(\vecr) \skipgradient{+\vecr \cdot \vec{g}(t)}
\end{pmatrix},
\quad 
{\vec M}^0(\vecr)=   \begin{pmatrix} 0 \\ 0 \\ M^{eq}(\vecr)\end{pmatrix}.
\\
\end{aligned}
\end{equation}
The diffusion operator $\Delta_D:=\nabla\cdot\Bigl(D(\vecr) \, \nabla \vec \cdot \Bigr)$ acts component wise on $\vec{M}$ and is equipped with homogeneous Dirichlet boundary conditions.
Also the field inhomogeneity $\delta B^0(\vecr)$ carries important information and an analogous term is relevant for model based quantitative susceptibility mapping \cite{QSM}.

Here time dependence 
of the radio frequency pulse $p$ 
\skipgradient{and/or of the gradient field $\vec{g}$} 
is essential in order to guarantee a sufficient excitation and sampling for the recovery of the imaging quantities $R_z(\vecr)$, $R_\perp(\vecr)$, $M^{eq}(\vecr)$, $\delta B_0(\vecr)$
and a substantial amount of research has thus already gone into the design of pulse sequences, cf., e.g., \cite{GrafSoellradlAignerRundStollberger:2022} 
and the references therein.
Along these lines, the criteria we derive on uniqueness in  Proposition~\ref{prop:lin-uniqueness}, Theorem~\ref{thm:lin-uniqueness}, Corollary~\ref{cor:BT} below, can give a general guideline on the choice of $p$ 
\skipgradient{(and $\vec{g}$)}  
to achieve a desired spatial resolution of $R_z$, $R_\perp$, $M^{eq}$, $\delta B_0$.

The typical observations are obtained as averages of the transversal magnetic field $(M_x,M_y)$, taken in parallel at ${\Ncoil}$ receive coils, with coil sensitivities $(\coilsens^j_x,\coilsens^j_y)$\MRIdetails{The fact that both components of the transversal magnetic field can be observed is achieved by qudrature demodulation, cf. Nishimura page 77.} 
\MRIdetails{The vector $(\coilsens^j_x,\coilsens^j_y)$ plays the role of the (transversally oriented) magnetic flux density per unit current of the $j$th receive coil; Faraday's law implies that the electromotive force (which is in fact work per charge rather than work per length) equals the negative time derivative of the magnetic flux through the coil, which in its turn by the reciprocity principle equals the inner product of the coil magnetic flux density per unit current with the magnetization.}
\begin{equation}\label{obs_BT}
y(t)=(\langle \coilsens^j_x,M_x(t)\rangle_{L^2(\Omega)},\,\langle \coilsens^j_y,M_y(t)\rangle_{L^2(\Omega)}), \quad j\in\{1,\ldots,{\Ncoil}\}.
\end{equation}
In view of the lower dimensionality of these observations, uniqueness of the imaging quantities $R_z$, $R_\perp$, $M^{eq}$, $\delta B_0$,  as functions of 3D space cannot be expected to hold on all of, e.g., a Lebesgue space $L^r(\Omega)$, $r\in[1,\infty]$. We therefore also aim to characterize the subspace of $L^r(\Omega)$ on which \eqref{obs_BT} uniquely determines $R_z$, $R_\perp$, $M^{eq}$, $\delta B_0$.

This work follows up on \cite{MRI-uniqueness}, where we have proven linearized uniqueness with very particular choices of $p(t)$ in the control term $(p(t)c^+_{x}(\vecr),p(t)c^+_{y}(\vecr))^T$.
In view of the importance of controlling the data acquisition by a proper excitation, the result provided here (Corollary~\ref{cor:BT}) contributes to laying the basis for a further mathematical optimization of pulse sequence design.

\medskip

Returning to the general setting \eqref{PDE}, \eqref{obs}, our aim is to provide a framework for proving 
linearized uniqueness, 
that is of injectivity of 
the derivative $\mathbb{F}'(u_{\text{ref}},\theta_{\text{ref}})$ of 
the forward operator
\begin{equation}\label{F}
\mathbb{F}:(u,\theta)\mapsto \left(\begin{array}{l}
t\mapsto\DD_t u(t) +(\calA+ \BB(t)+\HH(\theta))u(t)-f(t)\\
u(0)-u_0\\
t\mapsto\mathcal{C}u(t)-y(t)
\end{array}\right)
\end{equation}
at a properly chosen reference point $(u_{\text{ref}},\theta_{\text{ref}})$ under certain conditions on the control $\BB$, that derive from the injectivity proof.

Linearized uniqueness -- though not necessarily implying full uniqueness, since in an ill-posed setting the inverse function theorem does not apply -- is important for various reasons.
First of all, it entails applicability of Newton's method and regularized versions thereof, see e.g., \cite{BakKok04,KNSbook:2008} and the references therein. 
Secondly, it implies that regularity assumptions required for convergence rates (so-called source conditions) are generically satisfied, see, e.g., \cite{Flemming2018}.
Thirdly, criteria for the choice of the control in the sense of an optimal experimental design \cite{Alexanderian2021,Koerkeletal,Pukelsheim} are often based on the linearization of the forward operator and usually targeted at its injectivity. 

\medskip

The remainder of this paper is organized as follows.
In Subsection~\ref{sec:setting}, we provide more details on the function space setting of \eqref{PDE}, \eqref{obs}. 
Section~\ref{sec:uniqueness} is concerned with deriving (linearized) uniqueness criteria in this general setting. On the one hand, this leads to a linear independence condition on fundamental ODE solutions determined by the control, thus giving a guideline on control design. On the other hand, for any prescribed control, we spell out maximal coefficient spaces on which the linearized forward operator is injective. 
This abstract setting as well as the conditions for the uniqueness results are  first of all illustrated by a scalar model problem in Section~\ref{sec:diffusion}.
Finally, in Section~\ref{sec:BT}, we return to the motivating example \eqref{eqn:bloch-torrey}, \eqref{obs_BT} and verify the conditions required for the results in Section~\ref{sec:uniqueness}.

\subsection{Setting}\label{sec:setting}
We denote the state space by $U\subseteq L^p(0,T;V)$ and understand \eqref{PDE} to hold in an $L^p(0,T;W^*)$ sense for reflexive  spaces $V$, $W$ and $p\in[1,\infty]$.
Correspondingly, in \eqref{PDE} and \eqref{obs}, with a reference parameter $\theta_{\text{ref}}$,
\begin{itemize}
\item $\DD_t:U\to L^p(0,T;W^*)$ is a time differential operator, not necessarily of first order; it could as well be of higher or fractional order or a linear combination thereof; 
\item $\calA+\HH(\theta_{\text{ref}})\in L(V,W^*)$ is a spatial differential operator (e.g., a second order elliptic one) that is assumed to give rise to an eigensystem 
$(\lambda_\ell,\mathbb{E}_\ell,\Proj_\ell)_{\ell\in\mathbb{N}}$ with eigenvalues $\lambda_\ell\in\mathbb{C}$, 
eigenspaces\footnote{which we do not necessarily assume to be finite dimensional here} $\mathbb{E}_\ell=\text{ker}(\calA+\HH(\theta_{\text{ref}})-\lambda_\ell)$ and eigenprojections $\Proj_\ell=\text{Proj}_{\mathbb{E}_\ell}\in L(V,V)$ so that  
\begin{equation}\label{APlambdaP}
(\calA+\HH(\theta_{\text{ref}}))v_\ell=\lambda_\ell v_\ell, \quad v_\ell\in \mathbb{E}_\ell, \quad \ell\in\mathbb{N};
\end{equation}
Note that the identity \eqref{APlambdaP} holds in $W^*$ and implies well-definedness of $\Proj_\ell$ on $W^*$.
\item For any $t\in (0,T)$, $\theta\in\Theta$, the operators $\BB(t)\in L(V,W^*)$ and $\HH(\theta)\in L(V,W^*)$ act on the state $u$ in a pointwise in time way; 
\item So does the operator $\mathcal{C}\in L(V,Y)$; 
\item Correspondingly, we consider the data $y\in L^p(0,T;Y)$.
\end{itemize}
Since we use an all-at-once formulation of the parameter identification problem, our analysis is not necessarily tied to typical function space settings of parabolic, hyperbolic or other evolutionary PDE theory.
Yet, the use of a Bochner space setting is clearly very natural in the context of time dependent models and observations. 

\section{\skiplinear{Linearized} uniqueness for the abstract problem
\eqref{PDE}, \eqref{obs}
}\label{sec:uniqueness}
The linearization $\mathbb{F}'(u_{\text{ref}},\theta_{\text{ref}})$ of the forward operator $\mathbb{F}:U\times\Theta\to L^p(0,T;W^*)\times U_0\times L^p(0,T;Y)$ defined by \eqref{F} at some reference state-parameter pair $(u_{\text{ref}},\theta_{\text{ref}})$ can straightforwardly be derived as 
\begin{equation}\label{Fprime}
\begin{aligned}
&\mathbb{F}'(u_{\text{ref}},\theta_{\text{ref}}):U\times\Theta\to L^p(0,T;W^*)\times U_0\times L^p(0,T;Y)\\
&(\uld{u},\uld{\theta})\mapsto \left(\begin{array}{l}
t\mapsto\DD_t \uld{u}(t) +(\calA+ \BB(t)+\HH(\theta_{\text{ref}}))\uld{u}(t)+\HH'(\theta_{\text{ref}})\uld{\theta}\, u_{\text{ref}}(t)\\
\uld{u}(0)-u_0'(\theta_{\text{ref}})\uld{\theta}\\
t\mapsto\mathcal{C}\uld{u}(t)
\end{array}\right).
\end{aligned}
\end{equation}
It is indeed a G\^{a}teaux / Fr\'{e}chet derivative of $\mathbb{F}$ provided $\HH:\Theta\to L(V,W^*)$ and $u_0:\Theta\to U_0$ are G\^{a}teaux / Fr\'{e}chet differentiable. In the examples detailed in Sections~\ref{sec:diffusion} and \ref{sec:BT}, both are linear and thus Fr\'{e}chet differentiability holds.

Our goal is to prove the implication 
\begin{equation}\label{Fprimeinjective}
\mathbb{F}'(u_{\text{ref}},\theta_{\text{ref}})(\uld{u},\uld{\theta})=0 \ \Rightarrow  \ (\uld{u},\uld{\theta})=0
\end{equation}

We choose the reference point $(\theta_{\text{ref}},u_{\text{ref}})$ such that the state $u_{\text{ref}}$ is space-time separable, (cf., e.g. \cite{MRI-uniqueness} for the Bloch-Torrey context of MRI) such that for some time dependent function $\psi$ and some operator $\hrefprime$, the following holds. 
\begin{equation}\label{spacetimeseparable}
\begin{aligned}
&(\HH'(\theta_{\text{ref}})u_{\text{ref}})(t)=\psi(t)\hrefprime, \quad 
\psi(t)\in L(W^*,W^*),\ t\in(0,T),\quad \hrefprime\in L(\Theta,W^*)\\ 
\end{aligned}
\end{equation}

Moreover, we assume the control operator and the time dependent part of the reference state to 
commute with the eigenprojections in such a way that 
\begin{equation}\label{commBpsi}
\Proj_\ell[\BB(t) v]=\hat{\BB}_\ellnew(t)\,\Proj_\ell v, \quad
\Proj_\ell[\psi(t)v]=\hat{\psi}_\ellnew(t)\,\Proj_\ell v, \qquad v\in V
\end{equation}
for some operators $\hat{\BB}_\ellnew(t)$, $\hat{\psi}_\ellnew(t)$ $\in L(\mathbb{E}_\ell,\mathbb{E}_\ell)$, possibly different from $\BB(t)$, $\psi(t)$, and possibly depending on $\ellnew$.
In the examples below, this can be verified by using the fact that there, $\BB(t)$ and $\psi(t)$ just act as multiplication operators.

Projecting onto the eigenspaces, we can thus write the model part of the premise of \eqref{Fprimeinjective} as a sequence of ODEs in terms of the eigenspace components
\[
\begin{cases}
\Bigl(\DD_t+\lambda_\ell+\hat{\BB}_\ellnew(t)\Bigr)\Proj_\ell\uld{u}(t)+\hat{\psi}_\ellnew(t)\,\Proj_\ell[\hrefprime\uld{\theta}]=0, \quad t\in(0,T)\\  
\Proj_\ell\uld{u}(0)=\Proj_\ell[u_0'(\theta_{\text{ref}})\uld{\theta}]
\end{cases} \qquad \ell\in\mathbb{N}.
\]
At this point, previous uniqueness proofs often resorted to Laplace transform techniques; this is here impeded by the fact that the ODEs are non-autonomous due to the time dependent control term.
We therefore proceed in an alternative way\footnote{but will revisit the Laplace transform toolbox in the examples sections}, by defining,
for each $\ell\in\mathbb{N}$, the operator valued functions $\Psi_\ell,\,\Psi_\ell^0:[0,T)\to
L(\mathbb{E}_\ell,\mathbb{E}_\ell)
$ as the solutions to the initial value problem  
\begin{equation}\label{Psi_ell}
\begin{aligned}
&\Bigl(\DD_t+\lambda_\ell+\hat{\BB}_\ellnew(t)\Bigr)\Psi_\ell(t)+\hat{\psi}_\ellnew(t)=0, \ t\in(0,T),\quad \Psi_\ell(0)=0\\  
&\Bigl(\DD_t+\lambda_\ell+\hat{\BB}_\ellnew(t)\Bigr)\Psi_\ell^0(t)=0, \ t\in(0,T),\quad \Psi_\ell^0(0)=\text{id}_{\mathbb{E}_\ell},
\end{aligned}
\end{equation}
which allows us to express 
$\Proj_\ell\uld{u}(t)$ in terms of $\Proj_\ell[\hrefprime\uld{\theta}]$, $\Proj_\ell[u_0'(\theta_{\text{ref}})\uld{\theta}]$
\begin{equation}\label{bfroma}
\Proj_\ell\uld{u}(t)=
\Psi_\ell(t) \Proj_\ell[\hrefprime\uld{\theta}] + \Psi_\ell^0(t)\Proj_\ell[u_0'(\theta_{\text{ref}})\uld{\theta}], 
\quad t\in(0,T),\quad \ell\in\mathbb{N}.
\end{equation}
In the examples below, the eigenspaces are finite dimensional and so $\Psi_\ell(t)$, $\Psi_\ell^0(t)$ can simply be defined as multiplication operators with scalar (or matrix valued) functions.

Let us for the moment consider the case that we also have a commutation relation between these ODE solutions with the observation operator, in the sense that 
\begin{equation}\label{commC}
\mathcal{C}\Psi_\ell(t)=\hat{\Psi}_\ell(t) \hat{\mathcal{C}}_\ellnew, \quad
\mathcal{C}\Psi_\ell^0(t)=\hat{\Psi}_\ell^0(t) \hat{\mathcal{C}}_\ellnew^0, \quad 
t\in(0,T),\quad \ell\in\mathbb{N}
\end{equation}
for some operators $\hat{\mathcal{C}}_\ellnew$, $\hat{\mathcal{C}}^0_\ellnew$ $\in L(V,V)$, $\hat{\Psi}_\ell(t)$, $\hat{\Psi}_\ell^0(t)$ $\in L(V,Y)$.

Using this and \eqref{bfroma}, the observation part of the premise of \eqref{Fprimeinjective} can be written as 
\[
0=\mathcal{C}\uld{u}(t)= 
\sum_{\ell\in\mathbb{N}} \Bigl(\hat{\Psi}_\ell(t) \hat{\mathcal{C}}_\ellnew \Proj_\ell[\hrefprime\uld{\theta}] + \hat{\Psi}_\ell^0(t) \hat{\mathcal{C}}^0_\ellnew \Proj_\ell[u_0'(\theta_{\text{ref}})\uld{\theta}]\Bigr)
\quad t\in(0,T),
\]
where we have used linearity and continuity of $\mathcal{C}$ to exchange the infinite sum with application of $\mathcal{C}$.

A key assumption on the time dependent part $\psi$ of the reference state $u_{\text{ref}}$ (and on the control $\BB$) is now that 
the functions 
$\{\hat{\Psi}_\ell,\,\hat{\Psi}_\ell^0:(0,T)\to L(\mathbb{E}_\ell,\mathbb{E}_\ell)\, :\, \ell\in\mathbb{N}\}$
defined by \eqref{commBpsi}, \eqref{Psi_ell}, \eqref{commC} are linearly independent; by this we mean that for $v_\ell,v_\ell^0\,\in \mathbb{E}_\ell$, $\ell\in\mathbb{N}$
\begin{equation}\label{linearindependence}
\Bigl(\sum_{\ell\in\mathbb{N}}\bigl(\hat{\Psi}_\ell v_\ell+\hat{\Psi}_\ell^0 v_\ell^0\bigr)=0\quad \forall t\in(0,T)\Bigr) \ \Rightarrow \
\Bigl( v_\ell=0, \ v_\ell^0=0 \quad \forall \ell\in\mathbb{N}\Bigr),
\end{equation} 
which allows us to conclude 
\begin{equation}\label{obs0}
0= \hat{\mathcal{C}}_\ellnew\, [\Proj_\ell \hrefprime\uld{\theta}]
\ \text{ and } \
0= \hat{\mathcal{C}}^0_\ellnew\, [\Proj_\ell u_0'(\theta_{\text{ref}})\uld{\theta}]
\quad \ell\in\mathbb{N}.
\end{equation}

This implies $\uld{\theta}=0$, provided $\hat{\mathcal{C}}$ satisfies the following completeness property with respect to the parameter space
\begin{equation}\label{cond:Xpar_gen}
\Theta^{par}\,\cap\,\bigcap_{\ell\in\mathbb{N}} 
\text{ker}\bigl(\hat{\mathcal{C}}_\ellnew \Proj_\ell \hrefprime\bigr)\cap 
\text{ker}\bigl(\hat{\mathcal{C}}^0_\ellnew \Proj_\ell u_0'(\theta_{\text{ref}})\bigr)
=\{0\}.
\end{equation}

By \eqref{bfroma} we also have $\uld{u}=0$.

Thus we have proven the following.
\begin{proposition}\label{prop:lin-uniqueness}
Let $u_{\text{ref}}$ be defined such that \eqref{spacetimeseparable} holds, with $\psi$ and $\BB$ chosen such that \eqref{commBpsi}, \eqref{commC} and \eqref{linearindependence} hold.
Then on any subspace $\Theta^{par}$ of $\Theta$ satisfying \eqref{cond:Xpar_gen}, the linearized forward operator $\mathbb{F}'(u_{\text{ref}},\theta_{\text{ref}}):\Theta^{par}\to L^p(0,T;Y)$ is injective.
\end{proposition}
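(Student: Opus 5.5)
The plan is to assume $\mathbb{F}'(u_{\text{ref}},\theta_{\text{ref}})(\uld{u},\uld{\theta})=0$ with $\uld{\theta}\in\Theta^{par}$, and to show first $\uld{\theta}=0$ and then $\uld{u}=0$, following the chain of reductions laid out before the statement.

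\textbf{Step 1 (separating the model equation).} Insert \eqref{spacetimeseparable} into the first component of \eqref{Fprime}, so the source term becomes $\psi(t)\hrefprime\uld{\theta}$. Applying $\Proj_\ell$ and using \eqref{APlambdaP} together with \eqref{commBpsi} turns the PDE into, for each $\ell$, the non-autonomous evolution problem on $\mathbb{E}_\ell$ displayed above. The initial datum is $\Proj_\ell[u_0'(\theta_{\text{ref}})\uld{\theta}]$, and this step needs $\Proj_\ell$ to commute with $\DD_t$, which holds since $\Proj_\ell$ acts only in space.

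\textbf{Step 2 (representation of each component).} By linearity and uniqueness for the initial value problems \eqref{Psi_ell}, I verify that the right-hand side of \eqref{bfroma} solves the same problem as $\Proj_\ell\uld{u}$. Indeed, $\Psi_\ell(t)$ applied to the fixed vector $\Proj_\ell[\hrefprime\uld{\theta}]$ absorbs the source with zero initial value, while $\Psi^0_\ell$ carries the initial datum. Hence \eqref{bfroma} holds. Here one implicitly needs uniqueness of solutions of \eqref{Psi_ell}-type problems on $\mathbb{E}_\ell$, which I take as part of the well-definedness of $\Psi_\ell,\Psi_\ell^0$.

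\textbf{Step 3 (observation and linear independence).} Expand $\uld{u}(t)=\sum_\ell\Proj_\ell\uld{u}(t)$ and apply $\mathcal{C}$. Continuity and linearity of $\mathcal{C}$ allow exchanging $\mathcal{C}$ with the sum, and \eqref{commC} then gives
\[
0=\sum_\ell\bigl(\hat\Psi_\ell(t)\hat{\mathcal{C}}_\ell\Proj_\ell[\hrefprime\uld{\theta}]+\hat\Psi^0_\ell(t)\hat{\mathcal{C}}^0_\ell\Proj_\ell[u_0'(\theta_{\text{ref}})\uld{\theta}]\bigr)
\]
for all $t$. I apply \eqref{linearindependence} with $v_\ell=\hat{\mathcal{C}}_\ell\Proj_\ell\hrefprime\uld{\theta}$ and $v^0_\ell=\hat{\mathcal{C}}^0_\ell\Proj_\ell u_0'(\theta_{\text{ref}})\uld{\theta}$ to obtain \eqref{obs0}. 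This places $\uld{\theta}$ in the intersection in \eqref{cond:Xpar_gen}, so $\uld{\theta}=0$.

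\textbf{Step 4 (the state).} With $\uld{\theta}=0$, \eqref{bfroma} gives $\Proj_\ell\uld{u}=0$ for all $\ell$, hence $\uld{u}=0$.

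The main obstacle is the justification in Steps 1 and 3. Two points need care: that $\sum_\ell\Proj_\ell$ reproduces $\uld{u}(t)$, i.e.\ completeness of the eigensystem in $V$ (or $W^*$); and that the series converges so that $\mathcal{C}$ may be interchanged with it. I would treat both as implicit in the assumption that $\calA+\HH(\theta_{\text{ref}})$ gives rise to an eigensystem, and would state this explicitly as part of the setting.
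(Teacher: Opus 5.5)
Your proposal is correct and follows the paper's own argument step for step: you project onto the eigenspaces via \eqref{commBpsi} and obtain the representation \eqref{bfroma} through the fundamental solutions \eqref{Psi_ell}. You then exchange $\mathcal{C}$ with the eigen-expansion, use \eqref{commC} and \eqref{linearindependence} to reach \eqref{obs0}, conclude $\uld{\theta}=0$ from \eqref{cond:Xpar_gen}, and get $\uld{u}=0$ from \eqref{bfroma}. The caveats you flag are points the paper's argument also leaves implicit in its setting: completeness and convergence of $\sum_\ell\Proj_\ell$, uniqueness for the projected evolution problems, and $\Proj_\ell$ commuting with $\DD_t$.
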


To relax the linear independence assumption and dispose of \eqref{commC}, we write the linearized observation equation $\mathcal{C}\uld{u}=0$ with \eqref{bfroma} as $\Op\uld{\theta}=0$ with the operator 
\[
\begin{aligned}
&\Op:\Theta\to L^2(0,T;Y), \quad \\ &
\theta\mapsto \Bigl(t\mapsto
\sum_{\ell\in\mathbb{N}}
\bigl(\mathcal{C}\Psi_\ell(t) \Proj_\ell[\hrefprime\uld{\theta}] + \mathcal{C}\Psi_\ell^0(t)\Proj_\ell[u_0'(\theta_{\text{ref}})\uld{\theta}]\bigr)
\Bigr),
\end{aligned}
\]
whose adjoint is given by
\[
\begin{aligned}
&\Op^*:L^2(0,T;Y^*)\to\Theta^*, \quad \\ &
y^*\mapsto \int_0^T
\sum_{\ell\in\mathbb{N}}
\bigl(\hrefprime^*\Proj_\ell^*\Psi_\ell(t)^*
+u_0'(\theta_{\text{ref}})^*\Proj_\ell^*\Psi_\ell^0(t)^*
\bigr)\mathcal{C}^*y^*(t)\, dt.
\end{aligned}
\]

Let $\dualmap:\Theta\to\Theta^*$ be a single valued selection of the duality map defined by $\langle\dualmap(\theta),\theta\rangle_{\Theta^*,\Theta}=\|\theta\|_\Theta$, $\|\dualmap(\theta)\|_{\Theta^*}=1$, \footnote{existence of $\dualmap$ is a consequence of the Hahn-Banach Theorem; in case of a Hilbert space $\Theta$, $\dualmap$ is the inverse of the Riesz isomorphism} or more generally, a map allowing for the implication 
\begin{equation}\label{dualmap}
\langle \dualmap(\theta),\theta\rangle_{\Theta^*,\Theta} =0 \ \Rightarrow \ \theta=0 \qquad \theta\in\Theta.
\end{equation}
Assuming that
\begin{equation}\label{Xpar_Astar}
\dualmap(\Theta^{par})\subseteq \text{ker}(\Op)^\bot
\end{equation}
holds, and writing \eqref{obs0} as $\uld{\theta}\in\text{ker}(\Op)$, we deduce the chain of implications 
\[
\dualmap(\uld{\theta})\in \text{ker}(\Op)^\bot\text{ and }\uld{\theta}\in\text{ker}{\Op} 
\quad \Rightarrow \quad  
\langle \dualmap(\uld{\theta}),\uld{\theta}\rangle =0 \quad \Rightarrow \quad \uld{\theta}=0.
\]
Since, as another consequence of the Hahn-Banach Theorem, $\overline{\text{ran}(\Op^*)}$ 
is contained in the anihilator of $\text{ker}(\Op)$,\footnote{equality $\overline{\text{ran}(\Op^*)}=\text{ker}(\Op)$ holds, e.g., in the Hilbert space case} a sufficient condition for \eqref{Xpar_Astar} is  
\begin{equation}\label{Xpar_Astar_ran}
\dualmap(\Theta^{par})\subseteq \overline{\text{ran}(\Op^*)}.
\end{equation}

\begin{theorem}\label{thm:lin-uniqueness}
Let $u_{\text{ref}}$ be defined such that \eqref{spacetimeseparable} holds.
Then on any subspace $\Theta^{par}$ of $\Theta$ satisfying \eqref{Xpar_Astar} with $\dualmap$ satisfying \eqref{dualmap}, the linearized forward operator $\mathbb{F}'(u_{\text{ref}},\theta_{\text{ref}}):\Theta^{par}\to L^p(0,T;Y)$ is injective.
\end{theorem}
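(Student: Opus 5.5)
We argue directly from the premise of \eqref{Fprimeinjective}. Take $(\uld{u},\uld{\theta})$ with $\uld{\theta}\in\Theta^{par}$ and $\mathbb{F}'(u_{\text{ref}},\theta_{\text{ref}})(\uld{u},\uld{\theta})=0$. The aim is to show $\uld{\theta}=0$ first and then $\uld{u}=0$. Unlike Proposition~\ref{prop:lin-uniqueness}, we will not use \eqref{commC} or \eqref{linearindependence}; the only structural input is \eqref{spacetimeseparable}, together with the eigenprojection commutation \eqref{commBpsi} needed to define $\Psi_\ell,\Psi_\ell^0$.

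First, insert \eqref{spacetimeseparable} into the first component of \eqref{Fprime}. This gives $\DD_t\uld{u}+(\calA+\BB(t)+\HH(\theta_{\text{ref}}))\uld{u}+\psi(t)\hrefprime\uld{\theta}=0$ with $\uld{u}(0)=u_0'(\theta_{\text{ref}})\uld{\theta}$. Apply $\Proj_\ell$ and use \eqref{APlambdaP} and \eqref{commBpsi} to get the decoupled family of initial value problems on $\mathbb{E}_\ell$. By linearity and by the definition \eqref{Psi_ell} of $\Psi_\ell,\Psi_\ell^0$, the representation \eqref{bfroma} follows, using uniqueness of solutions of these ODEs on each eigenspace. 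Summing over $\ell$ (the eigensystem being complete) and applying the bounded operator $\mathcal{C}$, the third component $\mathcal{C}\uld{u}=0$ becomes exactly $\Op\uld{\theta}=0$, that is, $\uld{\theta}\in\ker(\Op)$.

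Second, since $\uld{\theta}\in\Theta^{par}$, assumption \eqref{Xpar_Astar} gives $\dualmap(\uld{\theta})\in\ker(\Op)^\bot$. Pairing with $\uld{\theta}\in\ker(\Op)$ yields $\langle\dualmap(\uld{\theta}),\uld{\theta}\rangle_{\Theta^*,\Theta}=0$, and \eqref{dualmap} then gives $\uld{\theta}=0$. Finally, \eqref{bfroma} with $\uld{\theta}=0$ gives $\Proj_\ell\uld{u}(t)=0$ for all $\ell$ and $t$. Completeness of the eigenprojections then yields $\uld{u}=0$, which establishes \eqref{Fprimeinjective}.

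The main obstacle is the first step: justifying that $\Op$ is well defined and that $\mathcal{C}\uld{u}=\Op\uld{\theta}$. This requires the series $\sum_\ell\Proj_\ell$ to reconstruct elements of $V$, so that $\mathcal{C}$ can be exchanged with the infinite sum, and it requires each eigenspace ODE \eqref{Psi_ell} to be uniquely solvable, including for non-first-order $\DD_t$. We would treat both as part of the standing setting of Subsection~\ref{sec:setting}: convergence of the eigenexpansion in $V$, and unique solvability of \eqref{Psi_ell}. With these in place, the rest is a short duality argument.
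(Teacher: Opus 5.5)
Your proposal is correct and follows essentially the paper's argument. You use \eqref{bfroma} to rewrite the observation equation $\mathcal{C}\uld{u}=0$ as $\uld{\theta}\in\ker(\Op)$, combine this with \eqref{Xpar_Astar} and \eqref{dualmap} to get $\langle\dualmap(\uld{\theta}),\uld{\theta}\rangle_{\Theta^*,\Theta}=0$ and hence $\uld{\theta}=0$, and then recover $\uld{u}=0$ from \eqref{bfroma}. Your listing of the tacit standing assumptions matches what the paper's derivation implicitly relies on: \eqref{commBpsi} to define $\Psi_\ell,\Psi_\ell^0$, unique solvability of \eqref{Psi_ell}, and convergence of the eigenexpansion so that $\mathcal{C}$ can be exchanged with the sum.
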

\begin{remark}\label{rem:Op_reduced}
Note that if $u_{\text{ref}}$ were the solution of \eqref{PDE} with $\theta=\theta_{\text{ref}}$, the operator $\Op$ would coincide with the linearization of the reduced forward operator $F:\theta\mapsto \mathcal{C}u(\theta)$, where $u(\theta)$ solves \eqref{PDE}. However, this would likely not allow to achieve space-time separability \eqref{spacetimeseparable}.
The fact that we do not impose the PDE constraint \eqref{PDE} on $(u_{\text{ref}},\theta_{\text{ref}})$ but use an all-at-once formulation instead, gives us the freedom to choose $u_{\text{ref}}$ such that \eqref{spacetimeseparable} holds.
\end{remark}

\section{Verification for the diffusion equation example}\label{sec:diffusion}
To keep the exposition transparent, we 
first of all
illustrate the abstract theory by the elementary example of identifying a potential $\pot=\pot(x)$ 
and the initial condition $\init=\init(x)$ 
in a diffusion equation subject to several different controls $\contr_n(t)$, 
$n\in\{1,\ldots,\Ncont\}$,
\begin{equation}\label{diffusion}
\begin{aligned}
&u_{n,t}(t,x)-\nabla\cdot (D(x)\nabla u_n(t,x))+\pot(x)\,u(t,x)+\contr_n(t)\,u_n(t,x)=f_n(t,x) \quad (t,x)\in (0,T)\times\Omega\\
&\partial_\nu u_n(t,x)+\gamma u_n(t,x)=0  \quad (t,x)\in (0,T)\times\partial\Omega\\ 
&u_n(0,x)=\init(x), \ x\in \Omega
\end{aligned}
\end{equation}
on a bounded Lipschitz domain $\Omega\subseteq\mathbb{R}^d$, $d\in\{1,2,3\}$,
from either boundary
\begin{equation}\label{C_bndy}
\mathcal{C}v=v\vert_\Gamma \text{ with }\Gamma\subseteq\partial\Omega
\end{equation}
or averaging 
\begin{equation}\label{C_avrg}
\mathcal{C}v=\Bigl(\langle \coilsens^j,v\rangle_{H^1(\Omega)^*,H^1(\Omega)}\Bigr)_{j\in J} \text{ with }\coilsens^j\in H^1(\Omega)^*, j\in J
\end{equation}
observations
\begin{equation}\label{obs_n}
y_n(t)=\mathcal{C}u_n(t)  \qquad t\in(0,T), \quad n\in\{1,\ldots,\Ncont\}.
\end{equation}

Here 
$D\in L^\infty(\Omega;\mathcal{R}^{d\times d})$ is a uniformly positive definite\footnote{not necessarily symmetric; cf. \cite{JiangLiPauronYamamoto2023} and the references therein for some facts on eigenprojections in the nonsymmetric case} 
given coefficient matrix, the source term $f_n\in L^2(0,T;H^1(\Omega)^*)$ is assumed to be known as well, and the equation \eqref{diffusion} is to be understood in the weak $L^2(0,T;H^1(\Omega)^*)$ sense according to the standard textbook framework of, e.g., \cite{EvansBook}. 
Note that we consider control by a bilinear term $p_n\,u_n$, rather than by the source $f_n$, in order to deal with a setting that is closer to the one of the MRI application \eqref{eqn:bloch-torrey}, \eqref{obs_BT}.

With the parameter $\theta=(\pot,\init)$, the state $u=(u_1,\ldots,u_{\Ncont})$, the operators $\DD_t=\partial_t$, $\calA:=-\nabla\cdot (D\nabla \cdot)$ equipped with impedance boundary conditions and acting component wise on $u=(u_1,\ldots,u_{\Ncont})$, the spaces $V=W=H^1(\Omega)^{\Ncont}$, the operators $\BB(t)$ and $\HH(\theta)$ defined as the component wise multiplication operators with $\contr_n\in L^2(0,T)$ and $\pot\in L^2(\Omega)$, respectively,
and $u_0(\theta)=(\init,\ldots,\init)\in L^2(\Omega)^n$,  
this can be cast into the form \eqref{PDE}. 

We choose a space-time separable reference state 
\begin{equation}\label{uref_diffusion}
\begin{aligned}
&u_{\text{ref}}(t,x)=(\psi_1(t),\ldots,\psi_{\Ncont}(t))\,\phi(x)\\
&\text{ with }
\psi_n\in H^1(0,T), \ \phi\in H^1(\Omega)\cap L^\infty(\Omega), \ \tfrac{1}{\phi}\in L^\infty(\Omega),
\end{aligned}
\end{equation} 
so that the operator $\hrefprime$ is defined by $\hrefprime(\uld{\pot},\uld{\init})=\phi\,\uld{\pot}$. 

\subsection{The commutation properties \eqref{commBpsi} and \eqref{commC}} 
\label{subsec:comm_diffusion}
These are trivially satisfied in this setting of  scalar valued state components $u_n$ and controls acting as scalar multipliers; hats on $\BB$, $\mathcal{C}$, $\psi$, $\Psi_\ell$, $\Psi_\ell^0$ can therefore simply be skipped.

\begin{remark}\label{rem:comm_diff}
Considering possible space dependence of $\contr(t)$, commutativity \eqref{commBpsi} would 
entail the conditon 
\begin{equation*}
\forall \ell,\,j\in \mathbb{N}, \ j\not=\ell, \ v_\ell\in \mathbb{E}_\ell,\ v_j\in \mathbb{E}_j \,:\
\langle v_\ell,\contr(t) v_j \rangle=0,
\end{equation*}
see \eqref{commBB_orth} in the proof of Lemma~\ref{lem:comm_BT}.
The following 1-d counterexample shows that this might not be likely to hold. 
On $\Omega=(0,1)$ with vanishing Dirichlet boundary conditions, we have the eigenfunctions $v_\ell(x)=\sin(\ell\pi x)$. Expanding $\contr(t)$ in a Fourier series 
$\contr(t,x)=\sum_{m\in\mathbb{N}_0} \Bigl(s_m(t)\sin(m\pi,x)+c_m(t)\cos(m\pi,x)\Bigr)$, with $s_0=0$, we obtain, using the trigonometric identities
{\small
\[
\begin{aligned}
&\sin \alpha\;\sin \beta\;\sin \gamma=\tfrac{1}{4}\Big(\sin(\alpha+\beta-\gamma)+\sin(\beta+\gamma-\alpha)+\sin(\gamma+\alpha-\beta)-\sin(\alpha+\beta+\gamma)\Big)
\\[0ex]
&\sin \alpha\;\sin \beta\;\cos \gamma=\tfrac{1}{4}\Big(-\cos(\alpha+\beta-\gamma)+\cos(\beta+\gamma-\alpha)+\cos(\gamma+\alpha-\beta)-\cos(\alpha+\beta+\gamma)\Big),
\end{aligned}
\]
}
that 
\[
\begin{aligned}
&\langle v_\ell,\contr(t) v_j \rangle=\\
&\frac14 \sum_{m\in\mathbb{N}_0} \int_0^1\Bigl(
s_m(t)\big(\sin((\ell+j-m)\pi x)+\sin((j+m-\ell)\pi x)\\[-3ex]
&\phantom{\frac14 \sum_{m\in\mathbb{N}_0} \int_0^1\Bigl(s_m(t)\big(}
		  +\sin((m+\ell-j)\pi x)-\sin((\ell+j+m)\pi x)\big)\\[-3ex]
&\phantom{\frac14 \sum_{m\in\mathbb{N}_0} \int_0^1\Bigl(}
+c_m(t)\big(\cos((\ell+j-m)\pi x)+\cos((j+m-\ell)\pi x)\\[-3ex]
&\phantom{\frac14 \sum_{m\in\mathbb{N}_0} \int_0^1\Bigl(+c_m(t)\big(}
		  +\cos((m+\ell-j)\pi x)-\cos((\ell+j+m)\pi x)\big)
\Big)\, dx
\end{aligned}
\]
This yields possible nonzero contributions also in case $j\not=\ell$ for any $m\not=0$:
For $m\in\mathbb{N}$, if $c_m(t)\not=0$, we can choose $j=m+\ell$ to obtain 
$\langle v_\ell,\contr(t) v_j \rangle=c_m(t)\not=0$.
\\
Thus, space dependence of the control multiplier does not seem to be compatible with the commutativity property \eqref{commBpsi} required for the analysis here.
\end{remark}

\subsection{The linear independence condition} 
\label{subsec:linindep_diffusion}
This can be verified by writing the  operator valued functions $\Psi_\ell=(\Psi_{\ell,1},\ldots,\Psi_{\ell,\Ncont})$, $\Psi_\ell^0=(\Psi_{\ell,1}^0,\ldots,\Psi_{\ell,\Ncont}^0)$ according to \eqref{Psi_ell} explicitly as the operators defined by multiplication with the scalar solutions  (with a slight overload of notation)
\begin{equation}\label{PsiPsi0_diffusioneq}
\begin{aligned}
&\Psi_{\ell,n}(t)=-\int_0^t e^{-\lambda_\ell(t-s)-\Contr_n(t)+\Contr_n(s)}\,\psi(s)\, ds
=e^{-\Contr_n(t)}(\tilde{\Psi}_{\ell[,n]}^0*\tilde{\psi}_n)(t)\\
&\Psi_{\ell,n}^0(t)=e^{-\lambda_\ell t-\Contr_n(t)}
=e^{-\Contr_n(t)}\tilde{\Psi}_{\ell[,n]}^0(t)\\
&\text{with }\Contr_n(t)=\int_0^t\contr_n(s)\, ds, \quad
\tilde{\Psi}_{\ell[,n]}^0(t)=e^{-\lambda_\ell t}, \quad
\tilde{\psi}_n(t)=e^{\Contr_n(t)}\psi(t)
\end{aligned}
\end{equation}
where the notation $[,n]$ indicates that $\tilde{\Psi}_{\ell[,n]}^0$ does not actually depend on $n$.
\\
Consider the set $\{\Psi_\ell,\,\Psi_\ell^0\,:\,\ell\in\mathbb{N}\}$ and note that its linear independence is equivalent to the one of $\{\tilde{\Psi}_\ell,\,\tilde{\Psi}_\ell^0\,:\,\ell\in\mathbb{N}\}$ with 
$\tilde{\Psi}_{\ell,n}(t):=e^{\Contr_n(t)}\Psi_{\ell,n}(t)$
$\tilde{\Psi}_{\ell,n}^0(t):=e^{\Contr_n(t)}\Psi_\ell^0(t)$.
By analytic extension to $t\in(0,\infty)$, taking the Laplace transform, we obtain, for all $n\in\{1,\ldots,\Ncont\}$,  
\begin{equation}\label{Laplace_equivalences}
\begin{aligned}
0=\sum_{\ell\in\mathbb{N}} \bigl(\mathfrak{a}_\ell \tilde{\Psi}_{\ell,n}(t)+\mathfrak{c}_\ell \tilde{\Psi}_{\ell[,n]}^0(t)\bigr)
=\sum_{\ell\in\mathbb{N}} \bigl(-\mathfrak{a}_\ell (\tilde{\psi}_n*\tilde{\Psi}_{\ell[,n]}^0)(t)+\mathfrak{c}_\ell \tilde{\Psi}_{\ell[,n]}^0(t)\bigr), \quad t>0, \\
\Leftrightarrow \quad 
0=\sum_{\ell\in\mathbb{N}} \frac{1}{z+\lambda_\ell}\bigl(-\mathfrak{a}_\ell \ (\mathcal{L}\tilde{\psi}_n)(z)+\mathfrak{c}_\ell \bigr), \quad z\in\mathbb{C}\setminus\{-\lambda_k\ : \ k\in\mathbb{N}\}\\
\Leftrightarrow \quad 
0=\sum_{\ell\in\mathbb{N}} \prod_{j\in\mathbb{N}\setminus\{\ell\}}(z+\lambda_j)\bigl(-\mathfrak{a}_\ell (\mathcal{L}\tilde{\psi}_n)(z)+\mathfrak{c}_\ell \bigr)=:\Phi(\tfrac{1}{z}), \quad z\in\mathbb{C}, 
\end{aligned}
\end{equation}
where in the last equivalence we multiplied with 
$\prod_{j\in\mathbb{N}}(z+\lambda_j)$, which allows taking the limits $z\to-\lambda_k$ to extend to all of $\mathbb{C}$. 
Now, setting $z:=\lambda_k$ and using $\prod_{j\in\mathbb{N}\setminus\{\ell\}}(-\lambda_k+\lambda_j)\not=0$, we arrive at
\begin{equation}\label{akLpsick}
\mathfrak{a}_k (\mathcal{L}\tilde{\psi}_n)(\lambda_k)=\mathfrak{c}_k\text{ for all }k\in\mathbb{N}, \ n\in\{1,\ldots,\Ncont\}.
\end{equation}
Note that in case $\mathcal{L}\tilde{\psi}_n$ is an analytic function of $\tfrac{1}{z}$, we even have equivalence of \eqref{Laplace_equivalences} and \eqref{akLpsick}, due to the fact that then also $\Phi$ is analytic and the set $\{\frac{1}{\lambda_k}\,:\,k\in\mathbb{N}\}$ is infinite and has an accumulation point (namely zero).
Choosing $\psi_n$, $\contr_n$ such that with 
$\mathcal{L}\tilde{\psi}_n=(\mathcal{L}\psi_n)*(\mathcal{L}\exp(\int_0^\cdot\contr_n(s)\, ds)$, the matrix {\small $\begin{pmatrix}
(\mathcal{L}\tilde{\psi}_1)(\lambda_k)
&\cdots&(\mathcal{L}\tilde{\psi}_{\Ncont})(\lambda_k)\\
-1&\cdots&-1
\end{pmatrix}$}
has full rank two for each $k\in\mathbb{N}$
that is,
\begin{equation}\label{fullrank}
\forall k\in\mathbb{N} \, \exists n_1=n_1(k),\,n_2=n_2(k)\in\{1,\ldots,\Ncont\}
\,:\ (\mathcal{L}\tilde{\psi}_{n_1})(\lambda_k)\not=(\mathcal{L}\tilde{\psi}_{n_2}),\end{equation}
we achieve linear independence of $\{\Psi_\ell,\,\Psi_\ell^0\,:\,\ell\in\mathbb{N}\}$ and thus \eqref{linearindependence}.

\subsection{The completeness property \eqref{cond:Xpar_gen}} \label{subsec:comp_diffusion}
In case of boundary observations \eqref{C_bndy} with $\Theta^{par}=\Theta=(L^2(\Omega))^2$, this follows from unique continuation, provided 
\begin{equation}\label{uniqueconinuation}
\Sigma\subseteq\overline{\Omega}\subseteq\mathbb{R}^d, \quad d\in\{2,3\}, \quad 
\Sigma\text{ is a $C^1$ manifold}, \quad \text{meas}^{d-1}(\Sigma)>0,
\end{equation}
cf., e.g., \cite{JiangLiPauronYamamoto2023,nonlinear_imaging_JMGT_freq,Tolsa:2023}.
This together with the above mentioned commutation property (subsection~\ref{subsec:comm_diffusion}) and linear independence result  
(subsection~\ref{subsec:linindep_diffusion}) 
implies linearized uniqueness of recovery of $\pot$ and $\init$ from observations \eqref{C_bndy}; uniqueness of their separate reconstruction is well known 
see, e.g., \cite{Isakov:2006}.
\\
In case of averaging observations \eqref{C_avrg} with $\coilsens^j\in L^2(\Omega)$, so that the dual pairing can be written as an $L^2(\Omega)$ inner product and using the $\Proj_\ell$ is the eigenprojection with respect to both $L^2(\Omega)$ and $V=H^1(\Omega)$, we obtain as a sufficient condition 
\begin{equation}\label{Thetapar_pot}
\Theta^{par}\subseteq\overline{\text{span}\{\frac{1}{\phi}\,\Proj_\ell\,\coilsens^j\, : \, \ell\in\mathbb{N}, \, j\in J\}\times\text{span}\{\Proj_\ell\,\coilsens^j\, : \, \ell\in\mathbb{N}, \, j\in J\}}
\end{equation}
for linearized uniquness of $\pot$ and $\init$ under linear independence implied by \eqref{fullrank}. 

\subsection{The general setting \eqref{Xpar_Astar}} 
More generally, from Theorem~\ref{thm:lin-uniqueness} (upon identification of the duality pairing with the inner product via the Riesz isomorphism) with 
\[
\Theta= L^2(\Omega)^2, \quad j(\theta)=\theta, \quad \hrefprime^*w^*=
\begin{pmatrix}\phi\,w^*\\0\end{pmatrix}, \quad
u_0'(\theta_{\text{ref}})^*w^*=\begin{pmatrix}0\\w^*\end{pmatrix},
\]
 we obtain that 
(cf. \eqref{Xpar_Astar})
$
\Theta^{par}\subseteq\overline{\text{ran}(\Op^*)}
$
with 
\begin{equation}\label{Op-adj_diff}
\begin{aligned}
&\Op^*:L^2(0,T;(Z^*)^{\Ncont})^{|J|}\to L^2(\Omega)^2, \quad \\ &
(z_1^*,\ldots,z_{\Ncont}^*)\mapsto 
\sum_{n=1}^{\Ncont}
\int_0^T e^{-\Contr_n(t)}
\sum_{\ell\in\mathbb{N}}
\left(\begin{array}{c}
(\tilde{\psi}_n*e^{-\lambda_\ell \cdot})(t)\,\phi\,\Proj_\ell^*\Psi_\ell(t)^*\mathcal{C}^*z_n^*(t)\\ e^{-\lambda_\ell t}\,\Proj_\ell^*\Psi_\ell(t)^*\mathcal{C}^*z_n^*(t)
\end{array}\right)\, dt
\end{aligned}
\end{equation}
for $\tilde{\psi}_n$, $\Contr_n$ defined as in \eqref{PsiPsi0_diffusioneq},
implies linearized uniqueness.

As already mentioned, under linear independence of the functions 
$\{(\tilde{\psi}_n*e^{-\lambda_\ell \cdot}),\, e^{-\lambda_\ell \cdot}\ : \ \ell\in\mathbb{N}\}$, the condition 
$\Theta^{par}\subseteq\text{ker}(\Op)^\bot=\overline{\text{ran}(\Op^*)}$ 
with \eqref{Op-adj_diff}
can be written as \eqref{Thetapar_pot} in case \eqref{C_avrg}, 
and in case \eqref{C_bndy} is satisfied by the trivial choice 
$\Theta^{par}=L^2(\Omega)$.

Altogether we have proven the following.
\begin{corollary}\label{cor:diffusion}
The forward operator corresponding to the inverse problem of recovering  $(\pot,\init)$ in \eqref{diffusion} from observations \eqref{obs}, linearized at $((\pot_{\text{ref}},\init_{\text{ref}}),\,u_{\text{ref}})$ satisfying \eqref{uref_diffusion} with $\psi_n(t)$ analytic, is injective on 
$\Theta^{par}\subseteq\overline{\text{ran}(\Op^*)}$ with \eqref{Op-adj_diff}.
\\
In particular, if $\contr_n$, $\psi_n$ are chosen such that \eqref{fullrank} holds, then in case of boundary observations \eqref{C_bndy} with \eqref{uniqueconinuation}, linearized uniqueness of $(\pot,\init)$ in $\Theta^{par}=L^2(\Omega)^2$ holds.
The same holds true for averaging observations \eqref{C_avrg} in $\Theta^{par}\subseteq$ \\
$\overline{\text{span}\{\frac{1}{\phi}\,\Proj_\ell\,\coilsens^j\, : \, \ell\in\mathbb{N}, \, j\in J\}\times\text{span}\{\Proj_\ell\,\coilsens^j\, : \, \ell\in\mathbb{N}, \, j\in J\}}$.
\end{corollary}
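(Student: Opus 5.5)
The plan is to reduce Corollary~\ref{cor:diffusion} to the abstract results, Theorem~\ref{thm:lin-uniqueness} for the general statement and Proposition~\ref{prop:lin-uniqueness} for the two particular cases. We check their hypotheses in the setting of Section~\ref{sec:diffusion}: $\Theta=L^2(\Omega)^2$, $\dualmap=\text{id}$ via Riesz, $\hrefprime(\uld{\pot},\uld{\init})=\phi\,\uld{\pot}$ and $u_0'(\theta_{\text{ref}})(\uld{\pot},\uld{\init})=(\uld{\init},\ldots,\uld{\init})$.

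First, \eqref{spacetimeseparable} holds by the choice \eqref{uref_diffusion}. Here $\psi(t)$ is multiplication by $(\psi_1(t),\ldots,\psi_{\Ncont}(t))$, acting componentwise. Both $\BB(t)$ and $\psi(t)$ are spatially constant multipliers, so they commute with every $\Proj_\ell$. This gives \eqref{commBpsi} with hats dropped, as in Subsection~\ref{subsec:comm_diffusion}. The fundamental solutions $\Psi_{\ell,n},\Psi^0_{\ell,n}$ are then the scalar expressions \eqref{PsiPsi0_diffusioneq}, and \eqref{commC} is trivial because scalar time functions commute with $\mathcal{C}$.

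For the general claim, Theorem~\ref{thm:lin-uniqueness} needs only \eqref{spacetimeseparable}, \eqref{dualmap} and \eqref{Xpar_Astar}. Since $\Theta$ is Hilbert, \eqref{dualmap} holds for $\dualmap=\text{id}$. Condition \eqref{Xpar_Astar} follows from the sufficient condition \eqref{Xpar_Astar_ran}, i.e.\ $\Theta^{par}\subseteq\overline{\text{ran}(\Op^*)}$. Inserting \eqref{PsiPsi0_diffusioneq} and the adjoints of $\hrefprime$ and $u_0'$ into the abstract formula for $\Op^*$ yields \eqref{Op-adj_diff}.

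For the particular cases, we show \eqref{linearindependence} via Subsection~\ref{subsec:linindep_diffusion}. We use analyticity of $\psi_n$ to extend $t\mapsto\sum_\ell(\mathfrak{a}_\ell\tilde\Psi_{\ell,n}+\mathfrak{c}_\ell\tilde\Psi^0_\ell)$ to $(0,\infty)$ and take Laplace transforms. Evaluating at the poles $-\lambda_k$ gives \eqref{akLpsick} for each $n$. By \eqref{fullrank}, two distinct values $(\mathcal{L}\tilde\psi_{n_1})(\lambda_k)\neq(\mathcal{L}\tilde\psi_{n_2})(\lambda_k)$ force $\mathfrak{a}_k=\mathfrak{c}_k=0$. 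Proposition~\ref{prop:lin-uniqueness} then reduces everything to the completeness property \eqref{cond:Xpar_gen}, i.e.\ to the conditions $\mathcal{C}\Proj_\ell(\phi\uld{\pot})=0$ and $\mathcal{C}\Proj_\ell\uld{\init}=0$ for all $\ell$.

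\emph{Boundary observations \eqref{C_bndy}.} The functions $\Proj_\ell(\phi\uld\pot)$ are eigenfunctions with vanishing Dirichlet trace on $\Gamma$. Their Robin condition then makes the Neumann trace vanish as well. Unique continuation under \eqref{uniqueconinuation} gives $\Proj_\ell(\phi\uld\pot)=0$ for all $\ell$, hence $\phi\uld\pot=0$ by completeness of the eigensystem. Since $1/\phi\in L^\infty$, this gives $\uld\pot=0$, and analogously $\uld\init=0$.

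\emph{Averaging observations \eqref{C_avrg}.} Using self-adjointness of $\Proj_\ell$ in $L^2$, the condition becomes $\langle \phi\uld\pot,\Proj_\ell\coilsens^j\rangle=0$, i.e.\ $\uld\pot\perp\phi\Proj_\ell\coilsens^j$. This is combined with membership of $\uld\pot$ in the closed span in \eqref{Thetapar_pot}.

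The main obstacle is this last step. Orthogonality to $\phi\,\Proj_\ell\coilsens^j$ matches the span of $\frac1\phi\Proj_\ell\coilsens^j$ only after choosing the right weighted pairing. I would first try the weighted inner product $\langle a,b\rangle_\phi=\int\phi^2ab$, equivalent to the $L^2$ one since $\phi,1/\phi\in L^\infty$, so that $\dualmap$ absorbs the factor. If that fails, I would take $\dualmap(\theta)=(\phi^2\uld\pot,\uld\init)$ in \eqref{dualmap}, which is admissible because $\phi^2>0$.

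A further subtlety is the Laplace-transform step: it needs convergence of $\sum_\ell$ and the regularized infinite product. In the nonsymmetric case $\lambda_\ell\in\mathbb{C}$ must have distinct values, which I would take as implicit in the eigensystem assumption.
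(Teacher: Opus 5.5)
Your proposal is correct and follows the paper's own route: Theorem~\ref{thm:lin-uniqueness} with $\dualmap=\text{id}$ gives the general claim, and Proposition~\ref{prop:lin-uniqueness} gives the particular cases, using the trivial commutation, linear independence via Laplace transform under \eqref{fullrank}, and unique continuation (boundary observations) or orthogonality (averaging observations) for the completeness property \eqref{cond:Xpar_gen}. For the averaging case your first option already suffices and no fallback is needed: $\uld{\pot}\perp\phi\,\Proj_\ell\coilsens^j$ in $L^2$ is the same as $\phi\,\uld{\pot}\perp\Proj_\ell\coilsens^j$, so $\phi\,\uld{\pot}\in\overline{\text{span}\{\Proj_\ell\coilsens^j\}}$ forces $\phi\,\uld{\pot}=0$; this is why the statement carries $\frac{1}{\phi}$ rather than the factor $\phi$ that $\overline{\text{ran}(\Op^*)}$ with the unweighted $\dualmap$ would give, a point the paper passes over.
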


\begin{remark}\label{rem:diff}
The usefulness of diffusion gets apparent in the definition of spaces in \eqref{cor:diffusion}. Averaging observations in the pointwise in space ODE case $D=0$ would only yield the finite dimensional parameter space 
$\text{span}\{\coilsens^j\,:\, j\in\{1,\ldots,\Ncoil\}\}$ (without the projections) rather than the inifinte dimensional one 
$\overline{\text{span}\{\Proj_\ell\coilsens^j\,:\, j\in\{1,\ldots,\Ncoil\},\,\ell\in\mathbb{N}\}}$.
\\
Indeed, e.g., in the 1-d setting from Remark~\ref{rem:comm_diff}, the theoretical choice 
$\coilsens^1(x)=\sum_{m\in\mathbb{N}} s_m \sin(m\pi x)$ with $s_m\not=0$, for all $m\in\mathbb{N}$ yields  
$\overline{\text{span}\{\Proj_\ell\coilsens^j\,:\, j\in J,\,\ell\in\mathbb{N}\}}\subseteq\overline{\text{span}\{\sin(\ell\pi \cdot)\,:\, \ell\in\mathbb{N}\}}=L^2(\Omega)$,
even with just one time trace $|J|=1$.
\end{remark}

\begin{remark}\label{rem:frac}
The subdiffusion case $\DD_t=\partial_t^\alpha$, $\alpha\in(0,1)$ defined, e.g., by the Djirbashian-Caputo derivative can be covered analogously, basically by substituting exponential by Mittag-Leffler functions. 
\end{remark}

\section{Verification for the Bloch-Torrey example}\label{sec:BT}
Note that in this section we use the vector notation $\vecr$ to distinguish the spatial point $\vecr=(x,y,z)$ from its first component.

Considering the equation in a rotating frame, we can skip the background field $B^0$ (cf., e.g., \cite{MRI-uniqueness}) and thus re-write \eqref{eqn:bloch-torrey} with $B^1$ defined by \eqref{eqn:bloch-torrey-defs} as
\begin{equation}\label{eqn:bloch-torrey-rot}
\begin{aligned}
&    \frac{d}{dt} \vec M(t, \vecr) + \gamma B^1(t, \vecr)\times\vec M(t, \vecr)    
+ R(\vecr) ({\vec M}(t, \vecr)-{\vec M}^0(\vecr))
\\&\hspace*{1cm}
    -\nabla\cdot\Bigl(D(\vecr) \, \nabla \vec M(t, \vecr)\Bigr)
=0\quad (t,\vecr)\in (0,T)\times\Omega, \quad
{\vec M}(0, \vecr)={\vec M}^0(\vecr)\quad \vecr\in \Omega
\end{aligned}
\end{equation}
with $B^1$ and $R$ defined by 
\begin{equation}\label{eqn:bloch-torrey-rot-defs}
\begin{aligned}
&B^1(t, \vecr)=
\begin{pmatrix} p(t)c^+_{x}(\vecr)\\p(t)c^+_{y}(\vecr)\\ 0\skipgradient{\gamma\vec{g}(t)\cdot\vecr}
\end{pmatrix},
\quad 
R(\vecr)=
\begin{pmatrix}
R_\perp(\vecr)&-\gamma\delta B^0(\vecr)&0\\
\gamma\delta B^0(\vecr)&R_\perp(\vecr)&0\\
0&0&R_z(\vecr)
\end{pmatrix}
\end{aligned}
\end{equation}
\skipgradient{
Alternatively, assuming $\vec{g}(t)\equiv \vec{g}_0$, we can use the setting
\begin{equation}\label{eqn:bloch-torrey-rot-defs-g0}
\begin{aligned}
&B^1(t, \vecr)=
\begin{pmatrix} p(t)c^+_{x}(\vecr)\\p(t)c^+_{y}(\vecr)\\0
\end{pmatrix},
\quad 
R(\vecr)=
\begin{pmatrix}
R_\perp(\vecr)&-\gamma(\delta B^0(\vecr)+\vecr \cdot \vec{g}_0)&0\\
\gamma(\delta B^0(\vecr)+\vecr \cdot \vec{g}_0)&R_\perp(\vecr)&0\\
0&0&R_z(\vecr)
\end{pmatrix}
\end{aligned}
\end{equation}
}
To cast the problem into the form \eqref{PDE}, with an operator $\HH$ that is linear with respect to $\theta$ and $u$ we define 
$\vec{u}_n=(M_e,M_x,M_y,M_z)_n$ where we add the trivial PDE for the auxiliary state $M_e$ 
\[
\frac{d}{dt} M_e(t, \vecr)=0, \quad t\in(0,T), \qquad M_e(t, \vecr)=M^{eq}(\vecr)
\]
and set 
\begin{equation}\label{defsBT}
\begin{aligned}
&\theta = (M^{eq},R_z,R_\perp,\delta B^0), \quad
\vec{u}_n=(M_e,M_x,M_y,M_z)_n, \quad n\in\{1,\ldots,\Ncont\}, \quad
u=(\vec{u}_1,\ldots,\vec{u}_{\Ncont}) \\
&\Theta=L^2(\Omega)^4, \quad V=(L^2(\Omega)\times H_0^1(\Omega)^3)^{\Ncont},\\
&\DD_t=\frac{d}{dt}, \quad
\calA=\text{diag}(0,-\Delta_D,-\Delta_D,-\Delta_D), \quad
u_0(\theta)=(M^{eq},0,0,M^{eq})^{\Ncont}\\
&(\BB(t)u(t))(\vecr)
=\gamma \bigl(\ulB_1(t,\vecr)\, \vec{u}_1(t,\vecr),\ldots,\ulB_{\Ncont}(t,\vecr)\, \vec{u}_{\Ncont}(t,\vecr)\bigr)\\
&(\HH(\theta)u(t))(\vecr)=
\bigl(\ulH(\theta(\vecr))\, \vec{u}_1(t,\vecr),\ldots,\ulH(\theta(\vecr))\, \vec{u}_{\Ncont}(t,\vecr)\bigr)\\
&\phantom{(\HH(\theta)u(t))(\vecr)}
=\bigl(\ulQ(\vec{u}_1(t,\vecr))\theta(\vecr),\ldots,\ulQ(\vec{u}_1(t,\vecr))\theta(\vecr)\bigr)
,\\
&\ulB_n(t,\vecr)= 
-p_n(t) \bigl(c_x^+(\vecr)(e_ye_z^T-e_ze_y^T)+c_y^+(\vecr)(e_ze_x^T-e_xe_z^T)\bigr) 
\skipgradient{-\vec{g}_n(t)\cdot\vecr \, (e_xe_y^T-e_ye_x^T)} 
\\
&\ulH(\theta)=\ulH(M^{eq},R_z,R_\perp,\delta B^0)=\text{diag}(0,R_\perp,R_\perp,R_z) +\gamma\delta B^0 (e_ye_x^T-e_xe_y^T)-R_z\,e_z e_e^T\\
&{\footnotesize \ulQ(u)= \ulQ(M_e,M_x,M_y,M_z)= \begin{pmatrix}
0&0&0&0\\
0&0&M_x&-\gamma M_y\\
0&0&M_y&\gamma M_x\\
0&M_z-M_e&0&0
\end{pmatrix}
}.
\end{aligned}
\end{equation}
\skipgradient{(In the setting \eqref{eqn:bloch-torrey-rot-defs-g0}, we just replace $\delta B^0$ by $\delta B^0+\text{id}\cdot\vec{g}_0$ in the definition of $\ulH(\theta)$ and $\vec{g}_n(t)$ by zero in the definition of $\ulB_n(t)$.)
}

We choose a space-time separable reference state 
\begin{equation}\label{uref_BT}
\begin{aligned}
&u_{\text{ref}}(t,x)=(\psi_1(t),\ldots,\psi_{\Ncont}(t))\underline{\phi}(\vecr)\\&\text{ with }\psi_n\in H^1(0,T), \ \underline{\phi}\in H^1(\Omega)^4\cap L^\infty(\Omega)^4, 
\end{aligned}
\end{equation}
so that \eqref{spacetimeseparable} holds with $\hrefprime =\ulQ(\underline{\phi})$.

\medskip

The eigensystem is structured as follows
\begin{equation}\label{eigensystemBT}
\begin{aligned}
&\sigma(\calA+\HH(\theta_{\text{ref}}))=\{0\}\cup\{\lambda_\ell \, :\, \lambda_{\ell}\text{ eigenvalue of }\calA_R
+\gamma\delta B^0_{\text{ref}} (e_ye_x^T-e_xe_y^T)\}, \\ 
&\mathbb{E}_0 = \{(\varphi,0,0, \calA_{R,z}^{-1}[R_{\text{ref},z}\varphi])\, :\,\varphi\in L^2(\Omega)\},
\\ 
&\mathbb{E}_\ell = \{0\}\times\tilde{\mathbb{E}}_{\ell}, \quad
\tilde{\mathbb{E}}_{\ell}
=\text{ker}(\calA_{R,\perp}+\gamma\delta B^0_{\text{ref}} (e_ye_x^T-e_xe_y^T)-\lambda_\ell,\,\calA_{R,z}-\lambda_\ell)\\
&\text{with }\calA_R
=\text{diag}(-\Delta_D+R_{\text{ref},\perp},-\Delta_D+R_{\text{ref},\perp},-\Delta_D+R_{\text{ref},z})\\
&\phantom{\text{with }}
\calA_{R,\perp}=\text{diag}(-\Delta_D+R_{\text{ref},\perp},-\Delta_D+R_{\text{ref},\perp}),\quad
\calA_{R,z}=-\Delta_D+R_{\text{ref},z}, 
\end{aligned}
\end{equation}
where we choose $R_{\text{ref},\perp}$, $R_{\text{ref},z}$ $\in L^\infty(\Omega)$ to be nonnegative functions.
\\
In case $R_{\text{ref},z}=0$, we simply have $\mathbb{E}_0 = L^2(\Omega)\times\{0\}^3$.
\\
In case $\delta B^0_{\text{ref}}=0$, the operator $\calA+\HH(\theta_{\text{ref}})=\text{diag}(0,-\Delta_D+R_{\text{ref},\perp},-\Delta_D+R_{\text{ref},\perp},-\Delta_D+R_{\text{ref},z})$ is symmetric, thus all eigenvalues are real and nonnegative, and the eigenfunctions are complete.
In this setting, due to the eigenvalue identity
\begin{equation}\label{eigvaleq}
(\calA+\HH(\theta_{\text{ref}})) v_\ell =\lambda_\ell v_\ell \ \Leftrightarrow \
\begin{cases}
(-\Delta_D+R_{\text{ref},\perp})v_{\ell,x}=\lambda_\ell v_{\ell,x}\\
(-\Delta_D+R_{\text{ref},\perp})v_{\ell,y}=\lambda_\ell v_{\ell,y}\\
(-\Delta_D+R_{\text{ref},z})v_{\ell,z}=\lambda_\ell v_{\ell,z}
\end{cases}
\end{equation}
one can easily read off that $\lambda_\ell>0$ is an eigenvalue of $\calA+\HH(\theta_{\text{ref}})$ iff it is an eigenvalue of $(-\Delta_D+R_{\text{ref},\perp})$ or of $(-\Delta_D+R_{\text{ref},z})$ with eigenspaces 
\[
\begin{aligned}
&\mathbb{E}_\ell = 
\mathbb{E}_{\perp,\ell}\times\mathbb{E}_{\perp,\ell}\times\mathbb{E}_{z,\ell}
\\
&\text{with } 
\mathbb{E}_{\perp,\ell}=\text{ker}(-\Delta_D+R_{\text{ref},\perp}-\lambda_\ell),\quad
\mathbb{E}_{z,\ell}=\text{ker}(-\Delta_D+R_{\text{ref},z}-\lambda_\ell),
\end{aligned}
\]
where at least one of the spaces in the tensor product defining $\mathbb{E}_\ell$ is nontrivial.
Moreover, the orthogonality relations
\begin{equation}\label{orthogonality}
\begin{aligned}
\forall \ell,\, j\in\mathbb{N}, \ j\not=\ell, \ v_\ell=(v_{\ell,e},\,v_{\ell,x},\,v_{\ell,y},\,v_{\ell,z}) \in \mathbb{E}_\ell, \  
v_j=(v_{j,e},\,v_{j,x},\,v_{j,y},\,v_{j,z}) \in \mathbb{E}_j:\\
\langle v_{\ell,x},\,v_{j,x} \rangle=0, \  
\langle v_{\ell,y},\,v_{j,y} \rangle=0, \
\langle v_{\ell,z},\,v_{j,z} \rangle=0, \
\langle v_{\ell,x},\,v_{j,y} \rangle=0, \
\end{aligned}
\end{equation}
hold (in particular also note the one between $x$ and $y$ components), 
by \eqref{eigvaleq}
and the usual reasoning, e.g., 
$\langle v_{\ell,x},\,v_{j,y} \rangle
=\frac{1}{\lambda_\ell} \langle (-\Delta_D+R_{\text{ref},\perp})v_{\ell,x},\,v_{j,y} \rangle
=\frac{1}{\lambda_\ell} \langle v_{\ell,x},\,(-\Delta_D+R_{\text{ref},\perp})v_{j,y} \rangle
=\frac{\lambda_j}{\lambda_\ell} \langle v_{\ell,x},\,v_{j,y} \rangle$.
\\
If $R_{\text{ref},z}=R_{\text{ref},\perp}$, then additionally to \eqref{orthogonality}
\begin{equation}\label{orthogonality_RperpRz}
\begin{aligned}
\forall \ell,\, j\in\mathbb{N}, \ j\not=\ell, \ v_\ell=(v_{\ell,e},\,v_{\ell,x},\,v_{\ell,y},\,v_{\ell,z}) \in \mathbb{E}_\ell, \  
v_j=(v_{j,e},\,v_{j,x},\,v_{j,y},\,v_{j,z}) \in \mathbb{E}_j:\\
\langle v_{\ell,x},\,v_{j,z} \rangle=0, \
\langle v_{\ell,y},\,v_{j,z} \rangle=0 
\end{aligned}
\end{equation}
holds.

\medskip

\subsection{The commutation properties \eqref{commBpsi} and \eqref{commC}}
\label{subsec:comm_BT}
We first investigate commutativity with the control operator \eqref{commBpsi}.
\begin{lemma}\label{lem:comm_BT}
Let $\BB\in L(V,V)$ be defined by $(\BB v)(\vecr)=\ulB\, v(\vecr)$, $v\in V$, with a (constant) matrix $\ulB\in\mathbb{R}^{4\times4}$, such that $\ulB_{e,\beta}=\ulB_{\beta,e}=0$, $\beta\in\{e,x,y,z\}$, and let, for each $\ell\in\mathbb{N}_0$, the eigenprojection $\Proj_\ell=\text{Proj}_{\mathbb{E}_\ell}$ be defined according to \eqref{eigensystemBT} with $\delta B^0_{\text{ref}}=0$ and $R_{\text{ref},\perp}=R_{\text{ref},z}=0$.

Then for each $\ell\in\mathbb{N}_0$, commutativity
$\Proj_\ell\BB=\BB\,\Proj_\ell$
holds.
\end{lemma}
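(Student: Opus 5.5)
With $\delta B^0_{\text{ref}}=0$ and $R_{\text{ref},\perp}=R_{\text{ref},z}=0$, the operator $\calA+\HH(\theta_{\text{ref}})$ reduces to $\text{diag}(0,-\Delta_D,-\Delta_D,-\Delta_D)$. Its eigensystem therefore has a simple tensor structure. For $\ell=0$ we have $\mathbb{E}_0=L^2(\Omega)\times\{0\}^3$, since $\calA_{R,z}$ is injective under homogeneous Dirichlet conditions. For $\ell\in\mathbb{N}$ we have $\mathbb{E}_\ell=\{0\}\times E_\ell^3$, where $E_\ell=\ker(-\Delta_D-\lambda_\ell)$ is the scalar eigenspace. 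Consequently $\Proj_\ell$ acts componentwise:
\[
\Proj_0 v=(v_e,0,0,0),\qquad \Proj_\ell v=(0,\pi_\ell v_x,\pi_\ell v_y,\pi_\ell v_z),\quad \ell\in\mathbb{N},
\]
where $\pi_\ell$ is the scalar eigenprojection onto $E_\ell$. The plan is first to justify this representation. The eigenspaces are orthogonal by \eqref{orthogonality}, together with \eqref{orthogonality_RperpRz} since $R_{\text{ref},z}=R_{\text{ref},\perp}$. They are complete because $-\Delta_D$ is symmetric with compact resolvent. Hence $\Proj_\ell$ is the orthogonal projection given componentwise as above. The $e$-component is orthogonal to all $\mathbb{E}_\ell$, $\ell\ge1$, and spans $\mathbb{E}_0$.

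The key step is then a direct computation. Write $\ulB=\begin{pmatrix}0&0\\0&\ulC\end{pmatrix}$ with $\ulC\in\mathbb{R}^{3\times3}$; this block form uses the assumption $\ulB_{e,\beta}=\ulB_{\beta,e}=0$. For $\ell=0$, both $\Proj_0\BB v$ and $\BB\Proj_0 v$ vanish: the first because $(\ulB v)_e=0$, the second because $\ulB$ annihilates vectors supported in the $e$-component.

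For $\ell\in\mathbb{N}$, the $(x,y,z)$-block of $\Proj_\ell\BB v$ is $(\pi_\ell\otimes I_3)(\ulC\,\vec v)$. Since $\ulC$ is a constant matrix and $\pi_\ell$ is a scalar linear operator acting identically on each component, $\pi_\ell\bigl(\sum_\beta C_{\alpha\beta}v_\beta\bigr)=\sum_\beta C_{\alpha\beta}\pi_\ell v_\beta$. This gives $\ulC(\pi_\ell\otimes I_3)\vec v$, which is the corresponding block of $\BB\Proj_\ell v$. The $e$-components are zero on both sides.

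The only point needing care is the first step: that the same scalar projection $\pi_\ell$ acts on all three components, with no mixing. This relies on $R_{\text{ref},\perp}=R_{\text{ref},z}$ and $\delta B^0_{\text{ref}}=0$, which is exactly where the hypotheses enter. If instead $R_{\text{ref},\perp}\neq R_{\text{ref},z}$, the $x,y$ and $z$ components would have different eigenspaces, and a $\ulC$ coupling $z$ with $x,y$ would not commute with $\Proj_\ell$. I would also remark, tying back to Remark~\ref{rem:comm_diff}, that constancy of $\ulB$ in space is essential: a space-dependent multiplier would generally not commute with $\pi_\ell$.
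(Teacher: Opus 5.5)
Your argument is correct, but it takes a more direct route than the paper's proof.

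The paper expands in eigenbases $\varphi_j^m$ of the spaces $\mathbb{E}_j$. It first shows that $\Proj_\ell\BB=\BB\,\Proj_\ell$ is equivalent to the vanishing of the cross matrix elements $\langle\varphi_\ell^k,\ulB\varphi_j^m\rangle$ for $j\neq\ell$, cf.\ \eqref{commBB_orth}. It then writes this componentwise as \eqref{cond_comm_bxbybz} and treats $\ell=0$ separately, which for general $R_{\text{ref},z}$ gives \eqref{commcond_ell0} with the term $R_{\text{ref},z}\calA_{R,z}^{-1}$. It concludes from three facts: $\ulB$ is constant with vanishing $e$-row and $e$-column, the orthogonality relations \eqref{orthogonality} and \eqref{orthogonality_RperpRz} hold, and $R_{\text{ref},z}=0$.

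You use the same structural facts up front to write the projections explicitly as $\Proj_0v=(v_e,0,0,0)$ and $\Proj_\ell=0\oplus(\pi_\ell\otimes I_3)$ for $\ell\in\mathbb{N}$. Commutation with the block matrix $\mathrm{diag}(0,\ulC)$ then reduces to linearity of $\pi_\ell$.

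What your version buys: it is shorter, needs no basis expansion or case distinction, and shows exactly where each hypothesis enters.

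What the paper's version buys: the general criterion \eqref{commBB_orth}, i.e.\ invariance of each eigenspace under $\BB$. This criterion still makes sense for nonzero reference relaxation rates and space-dependent matrix entries. The paper reuses it in Remark~\ref{rem:comm_diff} and Remark~\ref{rem:comm_BT} to discuss when commutativity can fail. Your explicit formula depends on the fully decoupled eigenspace structure $\mathbb{E}_\ell=\{0\}\times E_\ell^3$, so it does not carry over to those settings.
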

\begin{proof}
Under the eigensystem structure \eqref{eigensystemBT} with
\[
\mathbb{E}_j=\overline{\text{span}\{\varphi_j^m\,:\,m\in K^m\} }
\]
where $K^0=\mathbb{N}$, due to separability of $L^2(\Omega)$, and for $j\geq1$, $K^j$ is finite, due to compactness of $\calA_R^{-1}$, 
we have equivalence of $\Proj_\ell\BB=\BB\,\Proj_\ell$ with 
\[
\forall i,j\in \mathbb{N}_0, \ m\in K^j, \ k\in K^i \,:\
\underbrace{\langle \varphi_i^k,\Proj_\ell [\ulB \varphi_j^m] \rangle}_{
=\delta_{i,\ell} \langle \varphi_\ell^k,\ulB \varphi_j^m] \rangle}
= \delta_{j,\ell} \langle \varphi_i^k,\hat{\BB}_\ell \varphi_\ell^m \rangle 
\]
By a case distinction, one easily sees that 
this is equivalent to
\begin{equation}\label{commBB_orth}
\forall j\in \mathbb{N}_0\setminus\{\ell\}, \ m\in K^j, \ k\in K^\ell \,:\
\langle \varphi_\ell^k,\ulB \varphi_j^m \rangle=0.
\end{equation}
Written in a component wise manner, this reads as 
\begin{equation}\label{cond_comm_bxbybz}
\begin{aligned}
\forall j\in \mathbb{N}_0\setminus\{\ell\}, \ m\in K^j, \ k\in K^\ell \,:\
& 
\sum_{\alpha\in\{e,x,y,z\}} \sum_{\beta\in\{e,x,y,z\}}
\langle \varphi_{\ell,\alpha}^k,\ulB_{\alpha,\beta} \varphi_{j,\beta}^m \rangle
=0.
\end{aligned}
\end{equation}
In case $\ell=0$ with $\varphi_0^k=(\varphi,0,0, \calA_{R,z}^{-1}[R_{\text{ref},z}\varphi])$ for arbitrary $\varphi\in L^2(\Omega)$, this becomes
\[
\begin{aligned}
\forall j\in \mathbb{N}, \ m\in K^j, \ \varphi\in L^2(\Omega) \,:\
\sum_{\beta\in\{e,x,y,z\}}
\langle \varphi,\ulB_{e,\beta} \varphi_{j,\beta}^m \rangle
+\langle \calA_{R,z}^{-1}[R_{\text{ref},z}\varphi],\ulB_{z,\beta} \varphi_{j,\beta}^m \rangle
=0,
\end{aligned}
\]
that is,
\begin{equation}\label{commcond_ell0}
\begin{aligned}
\forall j\in \mathbb{N}, \ m\in K^j \,:\
\sum_{\beta\in\{e,x,y,z\}}\Bigl(\ulB_{e,\beta} \varphi_{j,\beta}^m
+R_{\text{ref},z}\, \calA_{R,z}^{-1}[\ulB_{z,\beta} \varphi_{j,\beta}^m]\Bigr)
=0 \text{ a.e. in }\Omega
\end{aligned}
\end{equation}
The same condition results with $\ulB$ replaced by $\ulB^T$ in the reverse case $j=0$ and $\ell\in\mathbb{N}$.

Due to the assumed constancy of the matrix entries, as well as the vanishing first row and column, condition \eqref{cond_comm_bxbybz} thus follow from \eqref{orthogonality}, \eqref{orthogonality_RperpRz}, and $R_{\text{ref},z}=0$.
\end{proof}

\begin{remark}\label{rem:comm_BT}
In the particular skew symmetric case 
$\ulB=b_x(e_ye_z^T-e_ze_y^T)+b_y(e_ze_x^T-e_xe_z^T)+b_z(e_xe_y^T-e_ye_x^T)$
relevant here, condition \eqref{cond_comm_bxbybz} (with possibly spatially varying matrix entries) reads as 
\begin{equation*}
\begin{aligned}
\forall j\in \mathbb{N}_0\setminus\{\ell\}, \ m\in K^j, \ k\in K^\ell \,:\
& \langle \varphi_{\ell,x}^k,b_z \varphi_{j,y}^m \rangle
-\langle \varphi_{\ell,x}^k,b_y \varphi_{j,z}^m \rangle
+\langle \varphi_{\ell,y}^k,b_x \varphi_{j,z}^m \rangle\\
&-\langle \varphi_{\ell,y}^k,b_z \varphi_{j,x}^m \rangle
+\langle \varphi_{\ell,z}^k,b_y \varphi_{j,x}^m \rangle
-\langle \varphi_{\ell,z}^k,b_x \varphi_{j,y}^m \rangle
=0.
\end{aligned}
\end{equation*}
A sufficient condition for commutation of $\ulB$ with the eigenprojections, that exploits skew symmetry with possibly nonzero and space dependent $b_z=b_z(\vecr)$ would be
\[
\begin{aligned}
&R_{\text{ref},\perp}=R_{\text{ref},z}= 0, \quad 
\nabla b_x=\nabla b_y=0, \\ 
&\forall \ell,\,j\in \mathbb{N}, j\not=\ell, \ m\in K^j, \ k\in K^\ell \,:\
& \langle \varphi_{\ell,x}^k, b_z \varphi_{j,y}^m \rangle
=\langle \varphi_{\ell,y}^k, b_z \varphi_{j,x}^m \rangle.
\end{aligned}
\]
It is not clear, in which eigenspace setting the latter is possible, though. 
\end{remark}

\medskip

To verify commutativity with the observation operator \eqref{commC} with \eqref{obs_BT}, we use 
\[
(\Psi_\ell(t)v)(\vecr) := \Bigl(\ulPsi_{\ell,n}(t)\,v_n(\vecr)\Bigr)_{n=1}^{\Ncont}
,\quad v\in \mathbb{E}_\ell\subseteq L^2(\Omega)^{4N}
\]
with $\ulPsi_{\ell,n}$ defined in \eqref{Psi_ell_BT} below, 
and the identity 
\[
\begin{aligned}
&\bigl(\mathcal{C}\Psi_\ell(t)v\bigr)_{j;\alpha}
=\Bigl(\langle \coilsens^j_\alpha,(\ulPsi_{\ell,n}(t)v_n)_\alpha\rangle_{L^2(\Omega)}\Bigr)_{n\in\{1,\ldots,\Ncont\}}\\
&\phantom{\bigl(\mathcal{C}\Psi_\ell(t)v\bigr)_{j;\alpha}}
=\Bigl(\sum_{\beta\in\{e,x,y,z\}}
\ulPsi_{\ell,n;\alpha,\beta}(t)\langle \coilsens^j_\alpha,v_{n,\beta}\rangle_{L^2(\Omega)}\Bigr)_{n\in\{1,\ldots,\Ncont\}}
=\bigl(\hat{\Psi}_\ell(t)\hat{\mathcal{C}}v\bigr)_{j;\alpha}\\
&v=(v_1,\ldots,v_{\Ncont})\in L^2(\Omega)^{4N},
\quad j\in\{1,\ldots,{\Ncoil}\},\quad \alpha\in\{x,y\},
\end{aligned}
\] 
with $\hat{\mathcal{C}}:L^2(\Omega)^{4N}\mapsto\bigl(\mathbb{R}^{2\times4}\bigr)^{{\Ncoil}N}$ (independent of $\ell$),
$\hat{\Psi}_{\ell,n}(t):\mathbb{R}^{2\times4}\to \mathbb{R}^2$,
\begin{equation}\label{Chat_BT}
\begin{aligned}
&
\hat{\mathcal{C}} v := 
\left(\langle \coilsens^j_\alpha,v_{n,\beta}\rangle_{L^2(\Omega)}\right)_{\alpha\in\{x,y\},\,\beta\in\{e,x,y,z\},\,j\in\{1,\ldots,{\Ncoil}\},\, n\in\{1,\ldots,\Ncont\}}
,\quad v\in L^2(\Omega)^{4N}, \\
&\hat{\Psi}_{\ell,n}(t) \ul{w} =\Bigl(\sum_{\beta\in\{e,x,y,z\}}
\ulPsi_{\ell,n;\alpha,\beta}(t)\ul{w}_{\alpha,\beta}\Bigr)_{\alpha\in\{x,y\}}, \quad \ul{w}\in \mathbb{R}^{2\times4},
\end{aligned}
\end{equation}
and likewise for $\Psi_{\ell,n}^0$.
\subsection{The linear independence condition}
The key observation here is that due to the definition of the control as multiplication with $4\times 4$ matrix valued functions, the fundamental solutions $\Psi_{\ell,n}(t)$, $\Psi_{\ell,n}^0(t)$ are not just general linear operators on the spaces $\mathbb{E}_\ell$, whose dimension tends to infinity as $\ell\to\infty$, but can be expessed as solutions to (nonautonomous) ODEs governed by $4\times 4$ matrices. 
Therefore, by a proper choice of $\Ncont\geq16$ control functions, linear independence should be realistically achievable. 

According to Lemma~\ref{lem:comm_BT}, in case $\delta B^0_{\text{ref}}=0$, $R_{\text{ref},\perp}=R_{\text{ref},z}=0$, we have, for  
$(\BB_n(t)u(t))(\vecr)=\ulB_n(t,\vecr)\, u(t,\vecr)$
defined by multiplication with a time (but not space) varying matrix $\ulB_n(t)$, that $\Proj_\ell[\BB_n(t)v]=\BB_n(t)\Proj_\ell v$, $\ell\in\mathbb{N}_0$. 

In this situation, 
we define 
for each $\ell\in\mathbb{N}$, the matrix valued (instead of operator valued, cf. \eqref{Psi_ell}) functions $\ulPsi_{\ell,n},\,\ulPsi_{\ell,n}^0:[0,T)\to \mathbb{R}^{4\times 4}$
 as the solutions to the initial value problems  
\begin{equation}\label{Psi_ell_BT}
\begin{aligned}
&\Bigl(\frac{d}{dt}+\lambda_\ell+\gamma p_n(t)\ulC^+\Bigr)\ulPsi_{\ell,n}(t)+\psi_n(t)I_4=0, \ t\in(0,T),\quad \ulPsi_{\ell,n}(0)=0\\  
&\Bigl(\frac{d}{dt}+\lambda_\ell+\gamma p_n(t)\ulC^+\Bigr)\ulPsi_{\ell,n}^0(t)=0, \ t\in(0,T),\quad \ulPsi_{\ell,n}^0(0)=I_4,
\end{aligned}
\end{equation}
with $\ulC^+=c_x^+(\vecr)(e_ye_z^T-e_ze_y^T)+c_y^+(\vecr)(e_ze_x^T-e_xe_z^T)\in\mathbb{R}^{4\times 4}$.

Similarly to \eqref{PsiPsi0_diffusioneq}, we define the auxiliary functions 
\[
\begin{aligned}
&\tilde{\Psi}_{\ell,n}(t)=\exp(\underline{P}_n(t))\ulPsi_{\ell,n}(t), \quad
\tilde{\Psi}_{\ell,n}^0(t)=\exp(\underline{P}_n(t))\ulPsi_{\ell,n}^0(t),\quad
\tilde{\psi}_{n}(t)=\psi_n(t)\exp(\underline{P}_n(t)), \\
&\text{with }\underline{P}_n(t):=\int_0^tp_n(s)\ulC^+\, ds.
\end{aligned}
\]
It is readily checked that the following explicit formula holds (although we will not make use of this here) 
\begin{equation}\label{explicitsolution_BT_DII_A}
\begin{aligned}
&\exp\left(\int_0^tp_n(s)\ulC^+\, ds\right)\\
&=\,U(\vec r)\,\text{diag}\Bigl(1,\,\exp(\imath\gamma|c^+(\vec r)|\int_{t_0}^tp_n(s)\, ds),\,\exp(-\imath\gamma|c^+(\vec r)|\int_{t_0}^tp_n(s)\, ds)\Bigr)\,U(\vec r)^*,
\end{aligned}
\end{equation}
with 
\[
U(\vec r)= \frac{1}{\sqrt{2}|c^+|}\left(\begin{matrix}
c_x^+(\vec r)&\imath c_y^+(\vec r)&\imath c_y^+(\vec r)\\
c_y^+(\vec r)&-\imath c_x^+(\vec r)&-\imath c_x^+(\vec r)\\
0&|c^+(\vec r)|&|c^+(\vec r)|
\end{matrix}\right), \quad 
|c^+|^2=(c_x^+)^2+(c_y^+)^2.
\]
Note that while $\psi_n(t)$ is a scalar, $\tilde{\psi}_{\ell,n}(t)\in\mathbb{R}^{4\times4}$.
\\
This allows us to rewrite 
\eqref{Psi_ell_BT} as 
\begin{equation*}
\begin{aligned}
&\Bigl(\frac{d}{dt}+\lambda_\ell\Bigr)\tilde{\Psi}_{\ell,n}(t)+\tilde{\psi}_n(t)=0, \ t\in(0,T),\quad \tilde{\Psi}_{\ell,n}(0)=0\\  
&\Bigl(\frac{d}{dt}+\lambda_\ell\Bigr)\tilde{\Psi}_{\ell,n}^0(t)=0, \ t\in(0,T),\quad \tilde{\Psi}_{\ell,n}^0(0)=I_4.
\end{aligned}
\end{equation*}

Now, similarly to Section~\ref{sec:diffusion}, for any 
$\underline{\mathfrak{a}}_\ell$, $\underline{\mathfrak{c}}_\ell$ $\in\mathbb{R}^{2\times4}$, $\ell\in\mathbb{N}_0$, we have  
\[
\begin{aligned}
&0=\sum_{\ell\in\mathbb{N}} \bigl( \hat{\Psi}_{\ell,n}(t)\underline{\mathfrak{a}}_\ell+\hat{\Psi}_{\ell,n}^0(t)\underline{\mathfrak{c}}_\ell\bigr), 
\quad t>0
\\
&\Rightarrow \quad 
0=\sum_{\ell\in\mathbb{N}} \bigl( 
\ulPsi_{\ell,n;\alpha,\beta}(t)\underline{\mathfrak{a}}_{\ell;\alpha,\beta}
+\ulPsi_{\ell,n;\alpha,\beta}^0(t)\underline{\mathfrak{c}}_{\ell;\alpha,\beta}\bigr), 
\quad t>0, \ 
\alpha\in\{x,y\}, \ \beta\in\{e,x,y,z\}\\
&\Rightarrow \quad 
0=\sum_{\ell\in\mathbb{N}} \bigl( 
\tilde{\ulPsi}_{\ell,n;\alpha,\beta}(t)\underline{\mathfrak{a}}_{\ell;\alpha,\beta}+\tilde{\ulPsi}_{\ell,n;\alpha,\beta}^0(t)\underline{\mathfrak{c}}_{\ell;\alpha,\beta}\bigr), 
\quad t>0, \
 \alpha\in\{x,y\}, \ \beta\in\{e,x,y,z\},
\end{aligned}
\]
where we have used the fact that $\ulC^+$ only acts on the $x$ and $y$ components. This identity holds for all $n\in\{1,\ldots,\Ncont\}$.
Analogously to \eqref{akLpsick} in Section~\ref{sec:diffusion} we conclude that
under the full rank condition 
\begin{equation}\label{fullrankBT}
\begin{aligned}
&\forall k\in\mathbb{N}, \, \alpha\in\{x,y\}, \, \beta\in\{e,x,y,z\}\, \\
&\exists n_1=n_1(k,\alpha,\beta),\,n_2=n_2(k,\alpha,\beta)\in\{1,\ldots,\Ncont\}
\,: \ 
(\mathcal{L}\tilde{\psi}_{n_1;\alpha,\beta})(\lambda_k)\not=(\mathcal{L}\tilde{\psi}_{n_2;\alpha,\beta})(\lambda_k)
\end{aligned}
\end{equation}
with $\mathcal{L}\tilde{\psi}_n=(\mathcal{L}\psi_n)*(\mathcal{L}\exp\left(\int_0^tp_n(s)\ulC^+\, ds\right)$, 
linear independence of the resulting functions 
$\hat{\ulPsi}_\ell=(\hat{\ulPsi}_{\ell,1},\ldots,\hat{\ulPsi}_{\ell,\Ncont})$, $\hat{\ulPsi}_\ell^0=(\hat{\ulPsi}_{\ell,1}^0,\ldots,\hat{\ulPsi}_{\ell,\Ncont}^0)$ holds. 
Condition \eqref{linearindependence} can therefore be achieved by a proper choice of the reference states via $\psi_n$ and of the controls $p_n$, $n\in\{1,\ldots,\Ncont\}$,  
$\Ncont\geq16$.

\subsection{The completeness property \eqref{cond:Xpar_gen}} 

Motivated by the fact that $\hrefprime$ and $u_0'(\theta_{\text{ref}})$ act on the components of $\uld{\theta}$ in a split manner --- $\hrefprime$ only acts on $\uld{R}_\perp$, $\uld{R}_z$, $\uld{\delta B}^0$, and $u_0'(\theta_{\text{ref}})$ only on $M^{eq}$ -- we define the auxiliary parameter vector
\[
\begin{pmatrix}
\uld{\tilde{\theta}}_0\\
\uld{\tilde{\theta}}_1\\
\uld{\tilde{\theta}}_2\\
\uld{\tilde{\theta}}_3
\end{pmatrix}
:= \begin{pmatrix}
\uld{M}^{eq}\\
\uld{R}_\perp\,\phi_x-\gamma\uld{\delta B}^0\,\phi_y\\
\uld{R}_\perp\,\phi_y+\gamma\uld{\delta B}^0\,\phi_x\\
\uld{R}_z\,(\phi_z-\phi_e)
\end{pmatrix}
=\ulQ(\underline{\phi})
\]
and note that the linear transformation $\mathcal{T}_\phi:\uld{\theta}\to\uld{\tilde{\theta}}$ is bounded and boundedly invertible on $L^2(\Omega)^4$
provided 
\begin{equation}\label{detnonzero}
\begin{aligned}
&d_\phi, \ \tfrac{1}{d_\phi}\, \in L^\infty(\Omega)\text{ for }
d_\phi(\vecr):=\det\bigl(\tilde{\ulQ}(\underline{\phi}(\vecr))\bigr)
=\gamma(\phi_x(\vecr)^2+\phi_y(\vecr)^2)(\phi_z(\vecr)-\phi_e(\vecr))
\\&\text{ with }
\tilde{\ulQ}(\underline{\phi})
=\begin{pmatrix}
0&\phi_x&-\gamma \phi_y\\
0&\phi_y&\gamma \phi_x\\
\phi_z-\phi_e&0&0
\end{pmatrix}.
\end{aligned}
\end{equation}
This implies 
\[
\begin{aligned}
&
(\hrefprime\uld{\theta})_n
=\begin{pmatrix}
0\\
\uld{\tilde{\theta}}_1\\
\uld{\tilde{\theta}}_2\\
\uld{\tilde{\theta}}_3
\end{pmatrix}
=\underline{S}\uld{\tilde{\theta}}
&&
(u_0'(\theta_{\text{ref}})\uld{\theta})_n
=
\begin{pmatrix}
\uld{\tilde{\theta}}_0\\0\\0\\ \uld{\tilde{\theta}}_0
\end{pmatrix} 
=\underline{S}_0\uld{\theta}=\underline{S}_0\uld{\tilde{\theta}}\\
&\text{ with }\underline{S}=\text{diag}(0,1,1,1)
&&
\underline{S}_0=(1,0,0,1)^T (1,0,0,0)
\end{aligned}
\]
according to \eqref{defsBT}. Moreover, with \eqref{Chat_BT}, we have
\[
\hat{\mathcal{C}}_\ellnew v := \hat{\mathcal{C}}_\ellnew^0 v := 
\hat{\mathcal{C}} v := 
\left(\langle \coilsens^j_\alpha,v_\beta\rangle_{L^2(\Omega)}\right)_{\alpha\in\{x,y\},\,\beta\in\{e,x,y,z\}}
,\quad v\in L^2(\Omega)^4. 
\]
Thus, we can write \eqref{cond:Xpar_gen} as
\begin{equation*}
\Theta^{par}\,\cap\,\bigcap_{\ell\in\mathbb{N}_0} 
\text{ker}\bigl(\hat{\mathcal{C}} \Proj_\ell [\ulQ(\underline{\phi})\cdot\,]^{\Ncont}\bigr)\cap 
\text{ker}\bigl(\hat{\mathcal{C}} \Proj_\ell [\underline{S}_0 \cdot\,]^{\Ncont}\bigr)
=\{0\}.
\end{equation*}
or in terms of the transformed coefficients as
\begin{equation*}
\tilde{\Theta}^{par}\,\cap\,\bigcap_{\ell\in\mathbb{N}_0} 
\text{ker}\bigl(\hat{\mathcal{C}} \Proj_\ell [\underline{S} \cdot\,]^{\Ncont}\bigr)\cap 
\text{ker}\bigl(\hat{\mathcal{C}} \Proj_\ell [\underline{S}_0 \cdot\,]^{\Ncont}\bigr)
=\{0\}.
\end{equation*}

We now focus on the setting $R_{\text{ref},\perp}=R_{\text{ref},z}=\delta B^0_{\text{ref}}=0$, 
\skipgradient{$\vec{g}=0$,} 
in which we can guarantee the commutation property \eqref{commBpsi} in case of constant coil sensitivities, and the eigensystem take the simple structure
\[
\begin{aligned}
&\sigma(\calA+\HH(\theta_{\text{ref}}))=\{0\}\cup\sigma(-\Delta_D), \\
&\mathbb{E}_0=L^2(\Omega)\times\{0\}^3, \quad 
\mathbb{E}_\ell=\{0\}\times\text{ker}(-\Delta_D-\lambda_\ell)^3, \ell\in\mathbb{N}.
\end{aligned}
\]
This allows us to more explicitly express the eigenprojections $\Proj_\ell$ to conclude
\[
\begin{aligned}
&\Proj_0 [\ulQ(\underline{\phi})\uld{\theta}]=
\begin{pmatrix}
0\\
0\\
0\\
0
\end{pmatrix}, \quad 
\Proj_\ell [\ulQ(\underline{\phi})\uld{\theta}]=
\begin{pmatrix}
0\\
\Proj_{D,\ell}[\uld{\tilde{\theta}}_1]\\
\Proj_{D,\ell}[\uld{\tilde{\theta}}_2]\\
\Proj_{D,\ell}[\uld{\tilde{\theta}}_3]
\end{pmatrix}
\quad \ell\in \mathbb{N},\\
&\Proj_0 [\underline{S} \uld\theta] = 
\begin{pmatrix}
\uld{\tilde{\theta}}_0\\
0\\
0\\
0
\end{pmatrix}, \quad
\Proj_\ell [\underline{S} \uld\theta] = 
\begin{pmatrix}
0\\
0\\
0\\
\Proj_{D,\ell}[\uld{\tilde{\theta}}_0]
\end{pmatrix}
\quad \ell\in \mathbb{N}.
\end{aligned}
\]
Identifying $\coilsens_\alpha^j$ with the mapping $v\mapsto \langle \coilsens_\alpha^j,v\rangle$ in the sense of the Riesz isomorphism on the Hilbert space $L^2(\Omega)$,
we obtain
\[
\begin{aligned}
&\hat{\mathcal{C}}\,\Proj_0\,[\underline{S} \cdot\,]
=0\\
&\hat{\mathcal{C}}\,\Proj_\ell\,[\underline{S} \cdot\,]
=\Bigl(\text{diag}(0,\Proj_{D,\ell}\coilsens_\alpha^j,\Proj_{D,\ell}\coilsens_\alpha^j,\Proj_{D,\ell}\coilsens_\alpha^j)\Bigr)_{j\in\{1,\ldots,\Ncoil\}, \, \alpha\in\{x,y\}}, 
\quad \ell\in\mathbb{N}\\
&\hat{\mathcal{C}}\,\Proj_0\,[\underline{S}_0 \cdot\,]
=\Bigl(\text{diag}(\coilsens_\alpha^j,0,0,0)\Bigr)_{j\in\{1,\ldots,\Ncoil\}, \, \alpha\in\{x,y\}}\\
&\hat{\mathcal{C}}\,\Proj_\ell\,[\underline{S}_0 \cdot\,]
=\Bigl(\text{diag}(\Proj_{D,\ell}\coilsens_\alpha^j,0,0,0)\Bigr)_{j\in\{1,\ldots,\Ncoil\}, \, \alpha\in\{x,y\}}, 
\quad \ell\in\mathbb{N}.
\end{aligned}
\]
Since $\coilsens_\alpha^j\in\text{span}\{\Proj_{D,\ell}\coilsens_\alpha^j,\,\ell\in\mathbb{N}\}$, thus 
$\{\coilsens_\alpha^j\}^\bot\supseteq
\bigcap_{\ell\in\mathbb{N}}\{\Proj_{D,\ell}\coilsens_\alpha^j\}^\bot$, $\alpha\in\{x,y\}$,
we have 
\[
\begin{aligned}
&\bigcap_{\ell\in\mathbb{N}} 
\text{ker}\bigl(\hat{\mathcal{C}}_\ellnew \Proj_\ell \hrefprime\mathcal{T}_\phi^{-1}\bigr)\cap 
\text{ker}\bigl(\hat{\mathcal{C}}^0_\ellnew \Proj_\ell u_0'(\theta_{\text{ref}})\mathcal{T}_\phi^{-1}\bigr)\\
&=\Bigl(\text{span}\{\Proj_{D,\ell}\coilsens_x^j,\Proj_{D,\ell}\coilsens_y^j\,:\, j\in\{1,\ldots,\Ncoil\},\,\ell\in\mathbb{N}\}^\bot\Bigr)^4,
\end{aligned}
\]
and \eqref{cond:Xpar_gen} reads as 
\[
\Theta^{par}=\mathcal{T}_\phi^{-1}\tilde{\Theta}^{par}
\subseteq \mathcal{T}_\phi^{-1}\Bigl(\Bigl(\overline{\text{span}\{\Proj_{D,\ell}\coilsens_x^j,\Proj_{D,\ell}\coilsens_y^j\,:\, j\in\{1,\ldots,\Ncoil\},\,\ell\in\mathbb{N}\}}\Bigr)^4\Bigr).
\]

\subsection{The general setting \eqref{Xpar_Astar}} 
Also here, we are in a Hilbert space setting and the duality mapping is the identity. Thus linearized uniqueness is implied by 
\[
\mathcal{T}_\phi\Theta^{par}=\tilde{\Theta}^{par}\subseteq \overline{\text{ran}(\Op^*)}
\]
with 
\begin{equation}\label{Op-adj_BT}
\begin{aligned}
&\Op^*:L^2(0,T;\mathbb{R}^{2\Ncont\Ncoil})\to L^2(\Omega)^4, \quad \\ 
&(y_1,\ldots,y_{\Ncont})\mapsto\\ 
&\sum_{j=1}^{\Ncoil}\sum_{n=1}^{\Ncont}
\sum_{\ell\in\mathbb{N}}
\int_0^T 
\Bigl(\text{diag}(0,\Proj_{D,\ell},\Proj_{D,\ell},\Proj_{D,\ell})\ulPsi_{\ell,n}(t)^T
+\text{diag}(I,0,0,0)\ulPsi_{\ell,n}^0(t)^T
\Bigr)
\\&\hspace*{9cm}
(0,y_{n;x}^j(t)\coilsens_x^j,y_{n;y}^j(t)\coilsens_y^j,0)\, dt.
\end{aligned}
\end{equation}
\begin{corollary}\label{cor:BT}
\eqref{detnonzero} 
The forward operator corresponding to the inverse problem of recovering  $(M^{eq},R_z,R_\perp,\delta B^0)$ in \eqref{eqn:bloch-torrey-rot}, \eqref{eqn:bloch-torrey-rot-defs} from observations \eqref{obs_BT}, linearized at $((M^{eq}_{\text{ref}},0,0,0),\,u_{\text{ref}})$ chosen according to \eqref{uref_BT} with $\psi_n(t)$ analytic and $\underline{\phi}$ satisfying \eqref{detnonzero}, is injective on 
$\Theta^{par}\subseteq\overline{\text{ran}(\Op^*)}$ with \eqref{Op-adj_BT}.
\\
In particular, if $p_n$, $\psi_n$ are chosen such that \eqref{fullrankBT} holds, then linearized uniqueness of $(M^{eq},R_z,R_\perp,\delta B^0)$ in $\Theta^{par}\subseteq$ 
$
\mathcal{T}_\phi^{-1}\Bigl(\Bigl(\overline{\text{span}\{\Proj_{D,\ell}\coilsens_x^j,\Proj_{D,\ell}\coilsens_y^j\,:\, j\in\{1,\ldots,\Ncoil\},\,\ell\in\mathbb{N}\}}\Bigr)^4\Bigr)
$ holds.
\end{corollary}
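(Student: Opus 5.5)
The proof assembles the verifications of Section~\ref{sec:BT}, in the same way as for Corollary~\ref{cor:diffusion}. We work at the reference point $\theta_{\text{ref}}=(M^{eq}_{\text{ref}},0,0,0)$, so that $\delta B^0_{\text{ref}}=R_{\text{ref},\perp}=R_{\text{ref},z}=0$. The space-time separable state $u_{\text{ref}}$ from \eqref{uref_BT} then gives \eqref{spacetimeseparable} with $\hrefprime=\ulQ(\underline{\phi})$. The eigensystem reduces to $\mathbb{E}_0=L^2(\Omega)\times\{0\}^3$ and $\mathbb{E}_\ell=\{0\}\times\text{ker}(-\Delta_D-\lambda_\ell)^3$.

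For the first statement, we apply Theorem~\ref{thm:lin-uniqueness} in the Hilbert space setting $\Theta=L^2(\Omega)^4$, with $\dualmap$ the identity, so that \eqref{dualmap} is trivial. It therefore suffices to identify $\Op^*$ with \eqref{Op-adj_BT} and to use $\overline{\text{ran}(\Op^*)}=\text{ker}(\Op)^\bot$, which gives \eqref{Xpar_Astar}. To identify $\Op^*$, we note that $\BB_n(t)$ is multiplication by a spatially constant matrix whose first row and column vanish (taking $c^+$ constant, as implicitly required). Lemma~\ref{lem:comm_BT} then gives \eqref{commBpsi} with $\hat\BB_\ell=\BB$, so the fundamental solutions are the matrix functions $\ulPsi_{\ell,n}$, $\ulPsi^0_{\ell,n}$ of \eqref{Psi_ell_BT}. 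With these, the formula for $\Op^*$ from Section~\ref{sec:uniqueness} reduces to \eqref{Op-adj_BT}, after inserting $\mathcal{C}^*$ for \eqref{obs_BT} and the explicit projections $\Proj_\ell\ulQ(\underline\phi)$ and $\Proj_\ell\underline S_0$. Since condition \eqref{detnonzero} makes $\mathcal{T}_\phi$ an isomorphism on $L^2(\Omega)^4$, we may pass between $\Theta^{par}$ and $\tilde\Theta^{par}$.

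For the second statement, we use Proposition~\ref{prop:lin-uniqueness} and check its three hypotheses in turn.
\begin{enumerate}
\item \emph{Commutation \eqref{commBpsi}.} This holds by Lemma~\ref{lem:comm_BT}, and $\psi_n$ is scalar.
\item \emph{Commutation \eqref{commC}.} This holds with $\hat{\mathcal{C}}$ and $\hat\Psi_{\ell,n}$ given by \eqref{Chat_BT}.
\item \emph{Linear independence \eqref{linearindependence}.} We start from $0=\sum_\ell\bigl(\hat\Psi_{\ell,n}\underline{\mathfrak a}_\ell+\hat\Psi^0_{\ell,n}\underline{\mathfrak c}_\ell\bigr)$ and pass to the tilde functions $\tilde\Psi_{\ell,n}=(e^{-\lambda_\ell\cdot}*\tilde\psi_n)$ and $\tilde\Psi^0_{\ell,n}=e^{-\lambda_\ell\cdot}I_4$. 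Analyticity of $\psi_n$ (and of $\exp(\underline P_n)$) lets us extend in $t$ and take Laplace transforms, entrywise in $(\alpha,\beta)$. Clearing denominators as in \eqref{Laplace_equivalences} and evaluating at the poles gives, for every $n$, the analogue of \eqref{akLpsick}: $\mathfrak a_{k;\alpha,\beta}(\mathcal L\tilde\psi_{n;\alpha,\beta})(\lambda_k)=\mathfrak c_{k;\alpha,\beta}$. Subtracting this identity for $n_1$ and $n_2$ from \eqref{fullrankBT} yields $\mathfrak a_k=0$, hence $\mathfrak c_k=0$.
\end{enumerate}

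This gives \eqref{obs0}. The completeness property \eqref{cond:Xpar_gen} then follows from the kernel computation already carried out above. There, $\Proj_0$ contributes only through $\underline S_0$, with the coil weights $\coilsens^j_\alpha$. We use that $\coilsens^j_\alpha$ lies in the closed span of $\{\Proj_{D,\ell}\coilsens^j_\alpha\}_{\ell\in\mathbb{N}}$, by completeness of the Dirichlet eigenfunctions. This shows that the intersection of kernels equals $\mathcal T_\phi^{-1}$ of the fourth power of the orthogonal complement of that span. Consequently, on the stated $\Theta^{par}$, we get $\uld\theta=0$, and then $\uld u=0$ by \eqref{bfroma}.

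The main obstacle is step~3, the linear independence. The Laplace-transform argument of Section~\ref{sec:diffusion} has to be carried out with matrix-valued $\tilde\psi_n=\psi_n\exp(\underline P_n)$, which no longer factor as a scalar convolution. I would reduce this to a scalar problem: the diagonalization \eqref{explicitsolution_BT_DII_A} leaves each $(\alpha,\beta)$ entry scalar. I would then check that the infinite-product step in \eqref{Laplace_equivalences} remains valid entrywise, using simplicity of the distinct $\lambda_k$. A secondary subtlety is that Lemma~\ref{lem:comm_BT} requires spatially constant $c^+$.
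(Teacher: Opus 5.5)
Your proposal follows the paper's own route. For the first claim it applies Theorem~\ref{thm:lin-uniqueness} in the Hilbert setting with $\Op^*$ from \eqref{Op-adj_BT}, transported through $\mathcal{T}_\phi$. For the second it uses Proposition~\ref{prop:lin-uniqueness} via Lemma~\ref{lem:comm_BT} (with spatially constant $c^+$, which the paper also tacitly assumes), \eqref{Chat_BT}, the Laplace-transform argument under \eqref{fullrankBT}, and the kernel computation based on $\coilsens^j_\alpha\in\overline{\text{span}}\{\Proj_{D,\ell}\coilsens^j_\alpha\}$.

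Your step~3 has a weak spot, which you inherit from the paper rather than introduce. For each $\alpha\in\{x,y\}$, the $\hat\Psi$-identity only gives the row sums $\sum_\beta\ulPsi_{\ell,n;\alpha,\beta}\mathfrak{a}_{\ell;\alpha,\beta}$, and $\exp(\underline{P}_n)$ mixes the $x,y,z$ rows, so entrywise identities for $\tilde\Psi_{\ell,n}$ do not follow. The diagonalization \eqref{explicitsolution_BT_DII_A} does not decouple them either, because the three rotation modes carry different phase factors $e^{\pm\imath\gamma|c^+|\int_0^t p_n(s)\,ds}$. Relatedly, $\tilde\psi_{n;\alpha,e}\equiv 0$, so \eqref{fullrankBT} cannot hold as literally written for $\beta=e$.
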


\begin{remark}\label{rem:diff_BT}
Also here, the importance of diffusion for the amount of information obtained from the measurements is essential, cf. Remark~\ref{rem:diff_BT}.
In the MRI context the difference corresponds to model based quantitative imaging with the Bloch-Torrey PDE versus the Bloch ODE.
\end{remark}

\begin{remark}\label{rem:rot}
The explicit form \eqref{Op-adj_BT} of $\Op^*$ in particular shows the importance of at least one of the the pulses $p_n$ rotating the magnetization away from the longitudinal direction (i.e., the $z$ axis) for otherwise, if all $\ulPsi_{\ell,n}(t)$, $\ulPsi_{\ell,n}^0(t)$ are just diagonal matrices, the equilibrium magnetization increment $\uld{M}^{eq}=\uld{\tilde{\theta}}_0$ just gets mapped into zero by $\Op^*$ and is thus not visible to the observations.
\end{remark}

\begin{remark}
An important tool to enhance spatial resolution of the imaging quantities in MRI is the use of a temporally varying gradient field $\vec{g}(t)\in\mathbb{R}^3$ in
\begin{equation}\label{eqn:bloch-torrey-rot-defs-g}
\begin{aligned}
&B^1(t, \vecr)=
\begin{pmatrix} p(t)c^+_{x}(\vecr)\\p(t)c^+_{y}(\vecr)\\ \gamma\vec{g}(t)\cdot\vecr
\end{pmatrix},
\quad 
R(\vecr)=
\begin{pmatrix}
R_\perp(\vecr)&-\gamma\delta B^0(\vecr)&0\\
\gamma\delta B^0(\vecr)&R_\perp(\vecr)&0\\
0&0&R_z(\vecr)
\end{pmatrix}
\end{aligned}
\end{equation}
in place of \eqref{eqn:bloch-torrey-rot-defs} in \eqref{eqn:bloch-torrey-rot}.
This would clearly fit into the above framework \eqref{PDE}, \eqref{obs} and even in most of the particular one from Section~\ref{sec:BT}, upon re-defining 
\[
\ulB_n(t,\vecr)= 
-p_n(t) \bigl(c_x^+(\vecr)(e_ye_z^T-e_ze_y^T)+c_y^+(\vecr)(e_ze_x^T-e_xe_z^T)\bigr) 
-\vec{g}_n(t)\cdot\vecr \, (e_xe_y^T-e_ye_x^T) 
\]
in \eqref{defsBT}.
However, the commutation property can so far not be verified for spatially varying multipliers (and might in fact fail to hold, see the one-dimensional counterexample in Remark~\ref{rem:comm_diff}, as well as Remark~\ref{rem:comm_BT}), which rules out nonvanishing $\vec{g}_n$. 
An alternative way of incorporating a $\vec{g}$ term (at least a time-constant one $\vec{g}(t)=\vec{g}_0$) would be to set 
\begin{equation*}
\begin{aligned}
&B^1(t, \vecr)=
\begin{pmatrix} p(t)c^+_{x}(\vecr)\\p(t)c^+_{y}(\vecr)\\0
\end{pmatrix},
\quad 
R(\vecr)=
\begin{pmatrix}
R_\perp(\vecr)&-\gamma(\delta B^0(\vecr)+\vecr \cdot \vec{g}_0)&0\\
\gamma(\delta B^0(\vecr)+\vecr \cdot \vec{g}_0)&R_\perp(\vecr)&0\\
0&0&R_z(\vecr)
\end{pmatrix},
\end{aligned}
\end{equation*}
remain with \eqref{eqn:bloch-torrey-rot-defs} in \eqref{eqn:bloch-torrey-rot}, and 
just replace $\delta B^0$ by $\delta B^0+\text{id}\cdot\vec{g}_0$ in the definition of $\ulH(\theta)$.
However, also then, the assumption of a vanishing skew symmetric component in the definition of $\calA+\HH(\theta_{\text{ref}})$, that we need in order to conclude orthogonality of the eigenfunctions, would enforce $\vec{g}$ to vanish. 
\end{remark}

\section*{Acknowledgment}
This research was funded in part by the Austrian Science Fund (FWF) 
[10.55776/F100800]. 

\end{document}